\documentclass[11pt]{amsart}
\usepackage[margin=1in]{geometry}

\usepackage{amsmath,amssymb,amsthm}
\usepackage[colorlinks=true,linkcolor=blue,citecolor=blue,urlcolor=blue]{hyperref}

\numberwithin{equation}{section}

\theoremstyle{plain}
\newtheorem{thm}{Theorem}[section]
\newtheorem{cor}[thm]{Corollary}
\newtheorem{lem}[thm]{Lemma}
\newtheorem{prop}[thm]{Proposition}

\theoremstyle{definition}
\newtheorem{defn}[thm]{Definition}

\theoremstyle{remark}
\newtheorem{rem}[thm]{Remark}

\newcommand{\Z}{\mathbb{Z}}
\newcommand{\Q}{\mathbb{Q}}
\newcommand{\R}{\mathbb{R}}

\newcommand{\F}{\mathbb{F}}
\newcommand{\SL}{\mathrm{SL}}
\newcommand{\GL}{\mathrm{GL}}
\newcommand{\PSL}{\mathrm{PSL}}
\newcommand{\PGL}{\mathrm{PGL}}
\newcommand{\tr}{\operatorname{Tr}}

\newcommand{\cO}{\mathcal{O}}
\newcommand{\fp}{\mathfrak{p}}

\newcommand{\GammaTwo}{\Gamma_2}

\begin{document}

\title[Failure of the congruence Hasse principle]{Failure of the congruence Hasse principle for commutators in
orientable surface groups}

\author{Jianchun Wu}
\address{Department of Mathematics, Soochow University, Suzhou 215006, CHINA}
\email{wujianchun@suda.edu.cn}

\subjclass[2020]{Primary 20F65; Secondary 20E06, 20G40, 11F06}

\keywords{Surface groups, congruence Hasse principle, commutators,
$\SL_2$ representations,  local--global principles}

\thanks{The author acknowledges partial support from the National Natural Science Foundation of China (Grant No. 12271385).}

\begin{abstract}
Ghosh, Meiri and Sarnak showed that the commutator equation over $\SL_2(\Z)$
satisfies a profinite local to global principle, and that for a prime
$\ell\equiv\pm1\pmod5$ it fails for $\SL_2(\Z[1/\ell])$ with a
reciprocity obstruction. We extend the failure to
surface groups. For the genus-$2$ surface group
$\GammaTwo$ we construct an explicit representation
$$
\rho\colon\GammaTwo\longrightarrow \SL_2\bigl(\cO_{K,S}\bigr),
\qquad K=\Q(\sqrt5),\quad
\cO_{K,S}=\Z\Bigl[\tfrac{1+\sqrt5}{2},\tfrac1{10}\Bigr]
$$
whose quotients modulo the maximal ideals are the full groups
$\SL_2(\F_q)$, except in residue characteristic 3, where the quotient has order 24. For every
$k\ge2$ the word $[a_1,b_1]^k$ is not a commutator in $\GammaTwo$,
while its image is a commutator in every one of these quotients. The same holds for every surface group $\Gamma_g$
with $g\ge2$.
\end{abstract}


\maketitle

\section{Introduction}

The Hasse principle in number theory asks whether a solution over $\Q$
is guaranteed by solutions over all completions of $\Q$. Ghosh, Meiri
and Sarnak~\cite{GhoshMeiriSarnak} studied the commutator equation from
this point of view and took every finite quotient of the group as a
local condition. They call the resulting principle profinite. For
$\SL_2(\Z)$ it holds, and the proof uses all finite quotients rather
than the congruence ones. For $\SL_2(\Z[1/\ell])$ with a prime
$\ell\equiv\pm1\pmod5$ it fails.  For this ring essentially all finite quotients are congruence quotients. The obstruction is a reciprocity condition on
cubic Markoff surfaces. This raises the question whether similar
phenomena occur for other groups, such as surface groups.

Every closed orientable surface of genus $g\ge2$ carries a hyperbolic
metric. So its fundamental group, the surface group
$\Gamma_g=\langle a_1,b_1,\dots,a_g,b_g
\bigm|\prod_{i=1}^g[a_i,b_i]=1\rangle$, embeds in $\PSL_2(\R)$ as a
discrete subgroup. $\R$ is a field, so there are no proper ideals to
reduce modulo, and this representation gives no congruences. To run
the Hasse principle on a group one needs finite quotients. A
representation into $\SL_2$ over a ring with many maximal ideals,
such as $\Z$ or $\Z[1/\ell]$, supplies them.

Let $\rho\colon\Gamma\to\SL_2(\cO_{K,S})$ be such a homomorphism, where
$\cO_{K,S}$ is the ring of integers of a number field $K$ with the
primes of a finite set $S$ inverted. The
finite quotients of a surface group are not tied to $\SL_2$, so the
local conditions come from $\rho$. For $\SL_2(\Z)$ the local tests are the
congruences modulo the integers $m$, and $m$ factorises into prime
powers. In a number field the finite quotients of the ring of integers
are indexed by ideals, and ideals factor into prime ideals. The role
of a prime $p$ is played by a maximal ideal $\fp$ of $\cO_{K,S}$, and
reduction modulo $\fp$ gives the finite image $\rho_\fp(\Gamma)$. The
\emph{congruence Hasse principle
for commutators} with respect to $\rho$ asserts that a word
$w\in[\Gamma,\Gamma]$ is a commutator in $\Gamma$ as soon as its image
is a commutator in every $\rho_\fp(\Gamma)$. The principle depends on
$\rho$. A trivial $\rho$ would test nothing. One therefore wants a
representation whose congruence quotients are large.

For surface groups we construct an explicit representation
\[
\rho\colon \GammaTwo \longrightarrow \SL_2(\cO_{K,S}),
\qquad K=\Q(\sqrt5),\quad
\cO_{K,S}=\Z\Bigl[\frac{1+\sqrt5}{2},\frac1{10}\Bigr]
\]
which has a faithful generic
form: $\rho_t\colon\GammaTwo\to\SL_2(\cO_{K,S}[t,t^{-1}])$, where $t$
is an indeterminate (Appendix~\ref{app:faithful}). Every $\Gamma_g$ with
$g\ge2$ embeds in $\GammaTwo$ as a subgroup of index $g-1$, so the
construction restricts to every surface group. The ring of coefficients is small. It is obtained from $\Z$ by adjoining a single
quadratic irrationality and inverting only $2$ and $5$. Its quotients
modulo the maximal ideals are as large as possible: in residue
characteristics different
from $3$ they are the full groups $\SL_2(\F_q)$, and they separate the
points of $\rho(\GammaTwo)$ (Corollary~\ref{cor:separate}). This representation
yields a failure of the principle. For every $k\ge2$ the word
$[a_1,b_1]^k$ is not a commutator in $\GammaTwo$, but its image is a
commutator in $\rho_\fp(\GammaTwo)$ for every maximal ideal $\fp$.
Passing to a subgroup of index
$g-1$ gives the same conclusion for $\Gamma_g$, for every $g\ge2$.

The paper is organised as follows. Section~\ref{sec:mech} isolates the
mechanism: a group-theoretic and an arithmetic input that combine into a
general criterion, Theorem~\ref{thm:criterion}. Section~\ref{sec:repr}
constructs the representation and shows that its congruence quotients are
large. Section~\ref{sec:main} proves the failure
and its descent to every genus.

\section{The mechanism}\label{sec:mech}

The mechanism has two inputs. The group-theoretic input produces elements of
the commutator subgroup that are not commutators. The arithmetic input makes
the congruence quotients large enough that every element in them is a
commutator. Theorem~\ref{thm:criterion} combines the two.

\subsection{The group-theoretic input}

Let $F_2=\langle a,b\rangle$ be a free group of rank 2. The following
lemma is well known; we include a short proof for completeness.

\begin{lem}\label{lem:freenoncomm}
For $n\in\Z$ the element $[a,b]^n$ is a commutator in $F_2$ if and only if
$n\in\{0,1,-1\}$.
\end{lem}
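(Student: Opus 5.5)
The easy direction is immediate. For $n=0$ we have $[a,b]^0=1=[1,1]$, for $n=1$ the element is already a commutator, and for $n=-1$ we have $[a,b]^{-1}=[b,a]$. So the content of the lemma is that $[a,b]^n$ is not a commutator in $F_2$ for $|n|\ge2$. Since $[x,y]^{-1}=[y,x]$, the set of commutators is closed under inversion, and it suffices to treat $n\ge2$.

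For the hard direction I would use the topological characterisation of commutator length. By Culler's theorem, or directly by the standard correspondence between solutions of $w=[x,y]$ and maps of surfaces, an element $w\in F_2$ is a commutator if and only if there is a map $f\colon\Sigma\to X$, where $X$ is the wedge of two circles and $\Sigma$ is a once-holed torus, whose boundary represents $w$. Take $X$ to be the one-skeleton of the torus $T$, so that $[a,b]$ is the attaching word of the $2$-cell. Suppose $[a,b]^n=[x,y]$. Compose $f$ with the inclusion $X\hookrightarrow T$ and cap off the boundary: since $\partial\Sigma$ maps to $[a,b]^n$, which bounds $n$ copies of the $2$-cell, we obtain a closed genus-one surface $\hat\Sigma$ mapping to $T$ with degree $n$. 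So $\deg\colon H_2(\hat\Sigma)\to H_2(T)$ is multiplication by $n$.

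Now the key step. The map $\hat\Sigma\to T$ sends the capping disc onto the $2$-cell by an $n$-fold covering of the boundary, and the rest of $\hat\Sigma$ into the $1$-skeleton. Equivalently, on $\pi_1$ the induced map $\Z^2\to\Z^2$ is determined by the images $\bar x,\bar y\in H_1(X)=\Z^2$ of $x,y$. For a torus mapping to a torus the degree equals the determinant of the map on $H_1$, so $n=\det(\bar x,\bar y)$. This determinant has no a priori bound, so an extra input is needed. I would get it from the fact that $f$ maps $\Sigma$ into the graph $X$. The cap is then the only part of $\hat\Sigma$ that covers the $2$-cell, and a local degree count at an interior point of the $2$-cell shows the degree is exactly the winding number $n$. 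This alone is consistent and gives no contradiction. So I would abandon this route and use a more algebraic argument.

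The cleanest proof I would actually carry out uses commutator length and Bavard/Euler class, in the form of the rotation number quasimorphism. Take the hyperbolic structure on the once-punctured torus, giving $F_2\hookrightarrow\PSL_2(\R)$ with $[a,b]$ parabolic, lifted to $\widetilde{\PSL_2(\R)}$. In a suitable lift, $[\tilde a,\tilde b]$ is translation by $\pm1$ (Milnor--Wood extremality), so its rotation number is $\pm1$ and that of $[a,b]^n$ is $\pm n$. Milnor--Wood gives $|\mathrm{rot}([\tilde x,\tilde y])|<1$ for any single commutator, with strict inequality and the value $\pm1$ attainable only in the boundary-parabolic case. Hence $|n|\le1$. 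The main obstacle is stating the sharp inequality $|\mathrm{rot}[\tilde x,\tilde y]|\le1$ precisely, including the lift conventions. It is a classical result of Wood and Eisenbud--Hirsch--Neumann, and I would cite it rather than reprove it.
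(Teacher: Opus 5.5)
Your final argument is correct, but it takes a genuinely different route from the paper. The paper handles the hard direction by quoting Culler's combinatorial computation $\mathrm{cl}([a,b]^n)=\lfloor n/2\rfloor+1$ for $n\ge1$. That gives the exact commutator length, both bounds, and never leaves the free group.

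You instead get a lower bound from a quasimorphism. Since $F_2$ is free, any pair in $\PSL_2(\R)$ lifts to a homomorphism $\tilde\rho\colon F_2\to\widetilde{\PSL_2(\R)}$. The translation number is conjugation invariant, homogeneous, and satisfies $|\mathrm{rot}(fg)-\mathrm{rot}(f)-\mathrm{rot}(g)|\le1$. Writing $[f,g]=f\cdot(gf^{-1}g^{-1})$ gives $|\mathrm{rot}[f,g]|\le1$, while $\mathrm{rot}\,\tilde\rho([a,b]^n)=n\,\mathrm{rot}\,\tilde\rho([a,b])=\pm n$.

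This is the easy half of Bavard duality. It needs no faithfulness, only one pair whose commutator lift has translation number $\pm1$. With the bound $(2m-1)$ for products of $m$ commutators, it reproduces Culler's lower bound $\mathrm{cl}([a,b]^n)\ge\lfloor n/2\rfloor+1$, though not the matching upper bound.

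The ``main obstacle'' you flag is therefore just the defect-one inequality above. The value $\pm1$ also needs no appeal to Milnor--Wood extremality. For $A=\begin{pmatrix}1&1\\1&2\end{pmatrix}$ and $B=\begin{pmatrix}1&-1\\-1&2\end{pmatrix}$ one computes $[A,B]=\begin{pmatrix}-1&0\\-6&-1\end{pmatrix}$, of trace $-2$. So $[\tilde A,\tilde B]$, which maps to $[A,B]$ in $\SL_2(\R)$, is a parabolic lift with odd integer translation number, hence $\pm1$ by the same bound.

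Two statements in your write-up need correcting.
\begin{enumerate}
\item $[\tilde a,\tilde b]$ is not a translation: translations are central, so $[a,b]$ would then act trivially on the circle. It is a parabolic lift with translation number $\pm1$, and it does not depend on the choice of lifts.
\item The claim that a single commutator has $|\mathrm{rot}|<1$ strictly is false. $[\tilde a,\tilde b]$ is itself a commutator with $|\mathrm{rot}|=1$, and a strict bound would show that $[a,b]^{\pm1}$ is not a commutator. The correct bound is $\le1$, which is what your last sentence uses and all you need.
\end{enumerate}
The abandoned topological paragraph should simply be deleted.
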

\begin{proof}
$[a,b]^{1}$, $[a,b]^{-1}=[b,a]$, and $[a,b]^{0}=1$ are all commutators.
Let $\mathrm{cl}(w)$ denote the commutator length of $w$, the least $m$ such
that $w$ is a product of $m$ commutators. So $w$ is a commutator exactly
when $\mathrm{cl}(w)=1$. By Culler~\cite[Example~2.6]{Culler1981}, for $n\geq 1$, $\mathrm{cl}([a,b]^n)=\mathrm{cl}([b,a]^n)=\lfloor
n/2\rfloor+1$ where $\lfloor
\cdot\rfloor$ denotes the greatest integer function. Hence both $[a,b]^n$ and $[a,b]^{-n}=[b,a]^n$ are not commutators when $n\geq 2$.
\end{proof}

\begin{cor}\label{cor:retr}
Let $\GammaTwo=\langle a_1,b_1,a_2,b_2\mid [a_1,b_1][a_2,b_2]\rangle$ be the
fundamental group of an orientable closed surface of genus $2$. Then for every $k\ge2$ the element
$[a_1,b_1]^k$ is not a commutator in $\GammaTwo$.
\end{cor}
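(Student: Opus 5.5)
The label of the corollary suggests the route: find a retraction of $\GammaTwo$ onto a free group of rank $2$ that sends $[a_1,b_1]$ to $[a,b]$, and then invoke Lemma~\ref{lem:freenoncomm}. Define $r\colon\GammaTwo\to F_2=\langle a,b\rangle$ on generators by
\[
r(a_1)=a,\quad r(b_1)=b,\quad r(a_2)=b,\quad r(b_2)=a .
\]
To see that this is a well-defined homomorphism, we check that the relator goes to $1$:
\[
r\bigl([a_1,b_1][a_2,b_2]\bigr)=[a,b][b,a]=1,
\]
since $[b,a]=[a,b]^{-1}$ with either commutator convention. By von Dyck's theorem, $r$ then extends to a homomorphism. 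It is surjective, and it restricts to the identity on the subgroup $\langle a_1,b_1\rangle$ once that subgroup is identified with $F_2$. We do not need this identification, though. All the argument uses is that $r$ exists and that $r([a_1,b_1]^k)=[a,b]^k$.

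Now suppose, for contradiction, that $[a_1,b_1]^k=[x,y]$ for some $x,y\in\GammaTwo$ and some $k\ge2$. Applying $r$ gives
\[
[a,b]^k=[r(x),r(y)],
\]
so $[a,b]^k$ would be a commutator in $F_2$. This contradicts Lemma~\ref{lem:freenoncomm}, because $k\notin\{0,1,-1\}$. Homomorphisms send commutators to commutators, so the property of not being a commutator pulls back along $r$.

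There is essentially no obstacle here. The only points needing care are the sign of the commutator convention in the relator and choosing images of $a_2,b_2$ that kill it; the choice above works for either convention.
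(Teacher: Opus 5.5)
Your proof is correct and is essentially the paper's argument: the same map $a_1\mapsto a$, $b_1\mapsto b$, $a_2\mapsto b$, $b_2\mapsto a$ kills the relator, and applying it to a hypothetical identity $[a_1,b_1]^k=[x,y]$ contradicts Lemma~\ref{lem:freenoncomm}. The only cosmetic difference is that you map into an abstract free group $F_2$ rather than onto the subgroup $\langle a_1,b_1\rangle\le\GammaTwo$, which spares you the step, implicit in the paper, of identifying that subgroup with $F_2$.
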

\begin{proof}
$\GammaTwo$ admits a retraction $r$ on to $F=\langle a_1,b_1\rangle$ defined by
$r(a_1)=a_1$, $r(b_1)=b_1$, $r(a_2)=b_1$ and $r(b_2)=a_1$; it is well defined,
since it carries the defining relation to $[a_1,b_1][b_1,a_1]=1$.  Suppose
$[a_1,b_1]^k=[u,v]$ in $\GammaTwo$. Then
$[a_1,b_1]^k=r([a_1,b_1]^k)=[r(u),r(v)]$ in $F\cong F_2$.
This is a contradiction by Lemma~\ref{lem:freenoncomm}.
\end{proof}

\subsection{The arithmetic input}

Let $K$ be a number field and let $S$ be a finite set of places of $K$.
The ring of $S$-integers $\cO_{K,S}$ consists of the elements of $K$ that
are integral at every finite place outside $S$. The residue field at any
maximal ideal of $\cO_{K,S}$ is a finite field $\F_q$. Its characteristic is the
unique rational prime below that maximal ideal.

Let $\SL_2(\cO_{K,S})$ be the group of
$2\times2$ matrices of determinant $1$ with entries in $\cO_{K,S}$. 
 For a maximal ideal $\fp$ of $\cO_{K,S}$, reduction
modulo $\fp$ preserves determinants, so it gives a homomorphism
\[
\pi_\fp\colon \SL_2(\cO_{K,S})\longrightarrow \SL_2(\F_q).
\]
For a homomorphism $\rho$ with values in $\SL_2(\cO_{K,S})$ we write
$\rho_\fp:=\pi_\fp\circ\rho$.

\begin{lem}\label{lem:thompson}
For every prime power $q\ge4$ every element of $\SL_2(\F_q)$ is a commutator.
\end{lem}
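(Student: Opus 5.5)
The plan is a direct elementary proof; this is Thompson's classical theorem, and I reduce it to a trace computation together with the conjugacy classification of $\SL_2(\F_q)$. First note that the set of commutators in $\SL_2(\F_q)$ is stable under conjugation by all of $\GL_2(\F_q)$. Indeed, conjugation by $g\in\GL_2(\F_q)$ is an automorphism of $\SL_2(\F_q)$, and $g[X,Y]g^{-1}=[gXg^{-1},gYg^{-1}]$. So it suffices to produce one commutator in each $\GL_2(\F_q)$-orbit on $\SL_2(\F_q)$.

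\emph{Trace computation.} Take $X=\mathrm{diag}(x,x^{-1})$ and $Y=\begin{pmatrix}a&b\\ c&d\end{pmatrix}\in\SL_2(\F_q)$. Then $XYX^{-1}=\begin{pmatrix}a&x^2b\\ x^{-2}c&d\end{pmatrix}$, and multiplying by $Y^{-1}$ gives
\[
\tr[X,Y]=2ad-bc(x^2+x^{-2})=2-bc\,(x-x^{-1})^2 .
\]
Since $q\ge4$, the group $\F_q^\times$ has order at least $3$, so there is some $x$ with $x^2\ne1$. Fix such an $x$ and set $\delta=(x-x^{-1})^2\neq0$. For any target $t\in\F_q$, put $s=(2-t)/\delta$ and take $b=1$, $c=s$, $a=1$, $d=1+s$, so that $ad-bc=1$. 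This shows every trace value $t\in\F_q$ occurs as the trace of a commutator.

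\emph{From traces to elements.} If $\tr g\ne\pm2$, then $g$ has distinct eigenvalues (possibly in $\F_{q^2}$), so $g$ is regular semisimple. Its $\GL_2(\F_q)$-orbit is then determined by its characteristic polynomial, that is, by its trace. If $\tr g=\pm2$, then either $g=\pm I$ or $g$ is $\pm$ a nontrivial unipotent. All nontrivial unipotents $\pm\begin{pmatrix}1&u\\0&1\end{pmatrix}$ with a fixed sign form a single $\GL_2(\F_q)$-orbit; the splitting into two classes for $q$ odd only happens in $\SL_2$. The identity $I$ is trivially a commutator. By the construction above, with $t=2$ and $s=0$ (giving $b=1$, $c=0$), we get $[X,Y]=\begin{pmatrix}1&*\\0&1\end{pmatrix}$ with $*=x^2-1\ne0$, a nontrivial unipotent. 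With $t=-2$ we get a commutator $C$ of trace $-2$. If $C\neq -I$, then $C$ is $-$(nontrivial unipotent), and that orbit is covered.

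\emph{The obstacle: the central element $-I$.} This is the step I expect to need separate care, since both the trace argument and the orbit argument are blind to it. When $q$ is even, $-I=I$ and there is nothing to do. When $q$ is odd, I use a quaternion-type pair. Take $X=\begin{pmatrix}0&1\\-1&0\end{pmatrix}$ and $Y=\begin{pmatrix}a&b\\ b&-a\end{pmatrix}$ with $a^2+b^2=-1$; such $a,b$ exist because every element of a finite field is a sum of two squares, and this gives $\det Y=1$. A direct check shows $XY=-YX$, so $[X,Y]=XYX^{-1}Y^{-1}=-I$. This exhausts all conjugacy classes and proves the lemma.
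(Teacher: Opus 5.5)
Your argument is correct. For the non-scalar elements it takes a different route from the paper; for $-I$ it is the same.

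\textbf{Where the routes differ.} The paper does not prove the non-scalar case. It cites Thompson's theorem for every non-scalar matrix and proves by hand only that $-I$ is a commutator. For that it uses exactly your quaternion-type pair $X=\begin{pmatrix}0&1\\-1&0\end{pmatrix}$, $Y=\begin{pmatrix}x&y\\y&-x\end{pmatrix}$ with $x^2+y^2=-1$, together with a pigeonhole count showing that such $x,y$ exist. You instead reprove the non-scalar part of Thompson's theorem from three ingredients:
\begin{enumerate}
\item the set of commutators is invariant under $\GL_2(\F_q)$-conjugation;
\item the explicit trace formula $\tr[X,Y]=2-bc\,(x-x^{-1})^2$ for diagonal $X$;
\item a non-scalar $2\times2$ matrix is cyclic, so its $\GL_2(\F_q)$-class is determined by its characteristic polynomial, which in $\SL_2$ means by its trace.
\end{enumerate}

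\textbf{What each buys.} Your version makes the lemma self-contained and shows exactly where $q\ge4$ is used: you need some $x$ with $x^2\ne1$, so that $\delta\ne0$. The paper's version is shorter, but it rests on the external citation.

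\textbf{One point to tighten.} You write ``If $C\ne -I$'' without checking the condition. If $C$ were $-I$, the class of $-(\text{nontrivial unipotent})$ would be left uncovered. From your own computation, the $(1,2)$-entry of $[X,Y]$ is $ab(x^2-1)$. With your choice $a=b=1$ this is $x^2-1\ne0$. So every commutator your construction produces is non-scalar, for every target trace $t$.

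Stating this also lets you drop the separate unipotent discussion. Ingredient 3 holds for all non-scalar matrices, including those of trace $\pm2$. Your construction gives a non-scalar commutator of every prescribed trace, so it hits every non-scalar $\GL_2(\F_q)$-class at once. Only $-I$ (for $q$ odd) remains, and your quaternion pair handles it.
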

\begin{proof}
Since $|\F_q|\ge 4$, Thompson's theorem~\cite[Theorem 2]{Thompson1961}
shows that every non-scalar matrix in $\SL_2(\F_q)$ is a commutator.

It remains to treat the scalar matrices. These are $I$ and, when $q$ is
odd, $-I$. The identity is a commutator, namely $I=[I,I]$. So assume $q$
is odd and consider $-I$. Let
$$
S=\{x^2:x\in\F_q\},\qquad S'=\{-1-y^2:y\in\F_q\}.
$$
For $x,y\in\F_q^{\times}$ we have $x^2=y^2$ exactly when $y=\pm x$, and
$x\ne-x$ because $q$ is odd, so squaring is two-to-one on $\F_q^{\times}$.
Hence $\F_q$ has $(q-1)/2$ nonzero squares; with $0=0^2$ this gives
$|S|=(q+1)/2$. Since $S'=-1-S$ and $|-S|=|S|$,
$|S|+|S'|=q+1>q$. Thus $S$ and $S'$ are subsets of the $q$-element set
$\F_q$ with $|S|+|S'|>q$, and they meet. So $x^2=-1-y^2$ for some
$x,y\in\F_q$.

Then
$$
X=\begin{pmatrix}0&1\\-1&0\end{pmatrix},\qquad
Y=\begin{pmatrix}x&y\\y&-x\end{pmatrix}
$$
lie in $\SL_2(\F_q)$, since $\det X=\det Y=1$.
Hence $XY=-YX$, and
$$
[X,Y]=XYX^{-1}Y^{-1}=(-YX)X^{-1}Y^{-1}=-YY^{-1}=-I.
$$
So $-I$ is a commutator, and the lemma follows.
\end{proof}

\subsection{The principle and a criterion for its failure}

\begin{defn}\label{def:chp}
Let $\Gamma$ be a group and $\rho\colon\Gamma\to\SL_2(\cO_{K,S})$ a
homomorphism. For a maximal ideal $\fp$ the \emph{congruence quotient} of
$\Gamma$ at $\fp$ with respect to $\rho$ is the finite group
$\rho_\fp(\Gamma)\le\SL_2(\F_q)$. We say that $\Gamma$ has the
\emph{congruence Hasse principle for commutators} with respect to $\rho$ if the
following holds. For every $x\in[\Gamma,\Gamma]$, if the image of $x$ in
$\rho_\fp(\Gamma)$ is a commutator for every maximal ideal $\fp$, then $x$ is a
commutator in $\Gamma$.
\end{defn}

The two inputs above combine into the following criterion.

\begin{thm}\label{thm:criterion}
Let $\Gamma$ be a group and $\rho\colon\Gamma\to\SL_2(\cO_{K,S})$ a
homomorphism, and let $E$ be a set of maximal ideals of $\cO_{K,S}$. Let
$w\in[\Gamma,\Gamma]$ be an element that is not a commutator in $\Gamma$.
Assume
\begin{enumerate}
\item[(i)] $\rho_\fp(\Gamma)=\SL_2(\F_q)$ with $|\F_q|\ge4$ for every
$\fp\notin E$,
\item[(ii)] the image of $w$ in $\rho_\fp(\Gamma)$ is a commutator
for every $\fp\in E$.
\end{enumerate}
Then the image of $w$ is a commutator in $\rho_\fp(\Gamma)$ for
every maximal ideal $\fp$ of $\cO_{K,S}$. In particular, $w$ witnesses
the failure of the congruence Hasse principle for commutators with
respect to $\rho$.
\end{thm}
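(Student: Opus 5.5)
The argument is a case split over the maximal ideals $\fp$ of $\cO_{K,S}$, according to whether $\fp\in E$ or $\fp\notin E$, followed by a comparison with Definition~\ref{def:chp}.

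\emph{Case $\fp\in E$.} Hypothesis (ii) says exactly that the image $\rho_\fp(w)$ is a commutator in $\rho_\fp(\Gamma)$, so there is nothing further to prove.

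\emph{Case $\fp\notin E$.} Hypothesis (i) gives $\rho_\fp(\Gamma)=\SL_2(\F_q)$ with $|\F_q|=q\ge4$. Lemma~\ref{lem:thompson} then shows that every element of $\SL_2(\F_q)$ is a commutator in $\SL_2(\F_q)$. In particular $\rho_\fp(w)=[X,Y]$ for some $X,Y\in\SL_2(\F_q)$. Since $\SL_2(\F_q)=\rho_\fp(\Gamma)$, both $X$ and $Y$ lie in $\rho_\fp(\Gamma)$. So $\rho_\fp(w)$ is a commutator in the congruence quotient itself, and not merely in some larger ambient group. This is the one point that needs care: Definition~\ref{def:chp} asks for a commutator inside $\rho_\fp(\Gamma)$, and the equality in (i) is what guarantees it.

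Combining the two cases, the image of $w$ is a commutator in $\rho_\fp(\Gamma)$ for every maximal ideal $\fp$. We also have $w\in[\Gamma,\Gamma]$ and, by assumption, $w$ is not a commutator in $\Gamma$. Hence $w$ satisfies the premise of the implication in Definition~\ref{def:chp} but not its conclusion. Therefore $\Gamma$ fails the congruence Hasse principle for commutators with respect to $\rho$, with $w$ as witness.

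There is no real obstacle here. The substantive work sits in Lemma~\ref{lem:thompson} and in verifying hypotheses (i) and (ii) for the specific representation $\rho$, which the paper does later.
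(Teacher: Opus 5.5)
Your proposal is correct and follows the paper's proof: the same case split on $\fp\in E$ versus $\fp\notin E$, using hypothesis (ii) in the first case and Lemma~\ref{lem:thompson} together with the equality $\rho_\fp(\Gamma)=\SL_2(\F_q)$ from (i) in the second. Your explicit check against Definition~\ref{def:chp} spells out the ``in particular'' clause, which the paper leaves implicit.
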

\begin{proof}
Let $\fp$ be a maximal ideal of $\cO_{K,S}$. If $\fp\notin E$, every
element of $\rho_\fp(\Gamma)=\SL_2(\F_q)$ is a commutator by
Lemma~\ref{lem:thompson}. If $\fp\in E$, the image of $w$ is a
commutator by (ii).
\end{proof}

\section{The arithmetic setting and the representation}\label{sec:repr}
The field $K=\Q(\sqrt5)$ has ring of integers $\cO_K=\Z[\tau]$, where
$\tau=\tfrac{1+\sqrt5}{2}$ is the golden ratio. This ring is a principal
ideal domain, since the class number of $K$ is $1$. Its fundamental unit is $\tau$, of norm $-1$, so its units
are $\{\pm\tau^m:m\in\Z\}$. Let $S$ be the set of places of $K$ above $2$ and $5$. The prime $5$ ramifies
in $K$, with $(5)=(\sqrt5)^2$, and the prime $2$ is inert. Above each of
$2$ and $5$ there is thus a single prime of $\cO_K$, and adjoining
$\tfrac12$ and $\tfrac15$ inverts both. The ring of $S$-integers is therefore
$$
\cO_{K,S}=\cO_K\bigl[\tfrac12,\tfrac15\bigr]=\Z\bigl[\tau,\tfrac{1}{10}\bigr].
$$

Set $F_0=\langle A_0,B_0\rangle\le\SL_2(\Z)$, where
$$
A_0=\begin{pmatrix}1&2\\0&1\end{pmatrix},\qquad
B_0=\begin{pmatrix}1&0\\2&1\end{pmatrix}.
$$
The group $F_0$ is free of rank $2$ and is called the \emph{Sanov
subgroup}. This is Sanov's theorem~\cite{Sanov1947} and is usually proved by the Ping--Pong Lemma
(see~\cite[Chapter~II.B]{delaHarpe}).

The commutator of the Sanov pair is
\[
J=[A_0,B_0]=\begin{pmatrix}21&-8\\8&-3\end{pmatrix}.
\]
Its characteristic polynomial is $\lambda^2-18\lambda+1$, and its eigenvalues
are $9\pm4\sqrt5=\tau^{\pm6}$. Set
\[
R=\begin{pmatrix}1&1\\ \tau^2 & \tau^{-2}\end{pmatrix}
 =\begin{pmatrix}1&1\\[2pt]
 \dfrac{3+\sqrt5}{2} & \dfrac{3-\sqrt5}{2}\end{pmatrix}
.
\]
Then $\det R=-\sqrt5$ and
\[
\Delta :=R^{-1}JR =\begin{pmatrix}\tau^{-6}&0\\0&\tau^6
\end{pmatrix}=\begin{pmatrix}9-4\sqrt5&0\\0&9+4\sqrt5\end{pmatrix}.
\]

\subsection{The representation $\rho$}

Set
\[
T=\begin{pmatrix}1&0\\0&2\end{pmatrix},
\]
and define
\[
A=(RT)^{-1}A_0(RT),\quad
B=(RT)^{-1}B_0(RT),\quad
C=R^{-1}B_0R,\quad
D=R^{-1}A_0R .
\]
The entries of $R$ lie in $\cO_K\subseteq\cO_{K,S}$, and $\sqrt5$, though
not a unit of $\cO_K$, is a unit of $\cO_{K,S}$, since
$1/\sqrt5=\sqrt5/5$. Hence $\det R=-\sqrt5$ is a unit of $\cO_{K,S}$, and
the entries of $R^{-1}$ lie in $\cO_{K,S}$ as well. The same holds for $T^{-1}$, because $\det T=2$ is also
a unit. Hence all four matrices above lie in $\SL_2(\cO_{K,S})$.

\begin{lem}\label{lem:relation}
The assignment $a_1\mapsto A$, $b_1\mapsto B$,
$a_2\mapsto C$, $b_2\mapsto D$ defines a homomorphism
$\rho\colon\GammaTwo\to\SL_2(\cO_{K,S})$.
\end{lem}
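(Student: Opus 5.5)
We must show $[A,B][C,D]=I$, since $\GammaTwo$ has the single defining relation $[a_1,b_1][a_2,b_2]$ and a homomorphism out of a one-relator group is determined by images of the generators satisfying that relation. The membership $A,B,C,D\in\SL_2(\cO_{K,S})$ is already established above, so only the relation needs checking.

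First, conjugation is a homomorphism, so it commutes with commutators. Hence
\[
[A,B]=(RT)^{-1}[A_0,B_0](RT)=T^{-1}R^{-1}JRT=T^{-1}\Delta T .
\]
Since $T$ and $\Delta$ are both diagonal, they commute, so $[A,B]=\Delta$. Here the role of $T$ disappears, and that is exactly why $T$ can be inserted harmlessly (it only serves to make the representation less degenerate).

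Second, again by conjugation,
\[
[C,D]=R^{-1}[B_0,A_0]R=R^{-1}J^{-1}R=\Delta^{-1},
\]
using $[B_0,A_0]=[A_0,B_0]^{-1}$. Therefore $[A,B][C,D]=\Delta\Delta^{-1}=I$, and $\rho$ is well defined.

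There is essentially no obstacle. The only computation to confirm is the diagonalization $R^{-1}JR=\Delta$ stated above. It amounts to checking that the columns $(1,\tau^2)^{T}$ and $(1,\tau^{-2})^{T}$ of $R$ are eigenvectors of $J$ with eigenvalues $\tau^{-6}$ and $\tau^{6}$. For the first column, the first row of $J$ gives $21-8\tau^2$. With $\tau^2=\tau+1$ this is $13-8\tau$, and $13-8\tau=\tau^{-6}$ because $\tau^{-6}=9-4\sqrt5=13-8\tau$. The second row gives $8-3\tau^2$, which should equal $\tau^{-4}=\tau^{-6}\cdot\tau^2$; this is a similar check in $\Z[\tau]$. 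The second column follows by applying the Galois conjugation $\sqrt5\mapsto-\sqrt5$, which sends $\tau^2\mapsto\tau^{-2}$, because $J$ has rational entries.
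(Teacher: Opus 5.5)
Your proposal is correct and follows the paper's own proof. Conjugation gives $[A,B]=T^{-1}\Delta T=\Delta$, since diagonal matrices commute, and $[C,D]=R^{-1}J^{-1}R=\Delta^{-1}$, so the relator maps to $I$; your eigenvector check of $R^{-1}JR=\Delta$ is also right, and it verifies a step the paper only asserts.
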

\begin{proof}
The matrices $T$ and $\Delta$ are both diagonal, so they commute. Hence
\[
[A,B]=(RT)^{-1}J(RT)=T^{-1}\Delta T=\Delta ,
\]
and
\[
[C,D]=R^{-1}[B_0,A_0]R=R^{-1}J^{-1}R=\Delta^{-1} .
\]
The product $[A,B][C,D]$ equals $I$, which is the image of the defining
relation of $\GammaTwo$.
\end{proof}

Consequently $\Delta=[A,B]$, and $\rho([a_1,b_1]^k)=\Delta^k$ for every
$k\in\Z$.

\begin{rem}
We choose $T$ diagonal so that it commutes with $\Delta$, and that is
what makes $[A,B]=T^{-1}\Delta T=\Delta$. Only the ratio of the two diagonal
entries of $T$ matters; here the ratio is $2$. The ratio also determines the
coefficient ring, since with ratio $2$ the primes inverted are $2$ and the
$5$ already forced by $R$. With ratio $1$ one would get $A=D$ and $B=C$, and
$\rho$ would factor through the retraction of $\GammaTwo$ onto
$\langle a_1,b_1\rangle$.
\end{rem}

Appendix~\ref{app:faithful} proves that the generic version of the
construction, with the ratio $2$ replaced by an indeterminate $t$, is
faithful.

\begin{cor}\label{cor:separate}
Let $\gamma\in\GammaTwo$. If $\rho_\fp(\gamma)=1$ for every maximal ideal
$\fp$ of $\cO_{K,S}$, then $\rho(\gamma)=I$. Equivalently, the congruence
quotients of $\rho$ separate the points of $\rho(\GammaTwo)$.
\end{cor}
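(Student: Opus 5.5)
The claim does not depend on any property of $\rho$ beyond the fact that its entries lie in $\cO_{K,S}$. So I would prove the general statement: a matrix $M\in\SL_2(\cO_{K,S})$ with $\pi_\fp(M)=I$ for every maximal ideal $\fp$ equals $I$. Applying this to $M=\rho(\gamma)$ gives the corollary. Since $\cO_{K,S}$ is a ring homomorphism target of reduction entrywise, the condition $\pi_\fp(M)=I$ means that every entry of $M-I$ lies in $\fp$. It therefore suffices to show that an element $x\in\cO_{K,S}$ lying in every maximal ideal must be $0$, i.e.\ that the Jacobson radical of $\cO_{K,S}$ is zero.

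For this I would argue directly. Suppose $x\ne0$ in $\cO_{K,S}$. Since $\cO_{K,S}=\Z[\tau,\tfrac1{10}]$, we may write $x=y/10^m$ with $y\in\cO_K=\Z[\tau]$ nonzero. Then $x$ lies in a maximal ideal $\fp$ of $\cO_{K,S}$ exactly when $y$ does, because $10$ is a unit. The maximal ideals of $\cO_{K,S}$ correspond to the nonzero primes of $\cO_K$ not above $2$ or $5$, and $\fp\cap\cO_K$ is such a prime.

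Now take a rational prime $p\nmid 10\,N(y)$, where $N(y)=y\bar y\in\Z\setminus\{0\}$. Any prime $\fp$ of $\cO_K$ above $p$ does not contain $y$, since otherwise it would contain $N(y)\in\Z$, hence $p\mid N(y)$. Moreover $\fp$ does not lie above $2$ or $5$, so it survives in $\cO_{K,S}$ as a maximal ideal not containing $x$. This contradicts the assumption, so $x=0$. Applying this to each entry of $\rho(\gamma)-I$ gives $\rho(\gamma)=I$.

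The equivalent phrasing follows immediately. Two elements $\rho(\gamma)\ne\rho(\gamma')$ are separated by some $\pi_\fp$, obtained by applying the above to $\gamma^{-1}\gamma'$. There is no real obstacle here; the only point needing care is the description of the maximal ideals of the localization $\cO_{K,S}$, together with the norm argument showing that a nonzero element avoids almost all of them.
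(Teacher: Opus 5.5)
Your proof is correct and follows the same route as the paper: both reduce to showing that a nonzero element of $\cO_{K,S}$ cannot lie in every maximal ideal. The only difference is the last step. The paper uses that $\cO_{K,S}$ is a PID, so a nonzero element has finitely many prime divisors while there are infinitely many maximal ideals; you instead exhibit an avoiding prime directly via a rational prime $p\nmid 10\,N(y)$, which has the small advantage of not needing class number one.
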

\begin{proof}
Suppose that $\rho_\fp(\gamma)=1$ for every maximal ideal $\fp$ of
$\cO_{K,S}$. Then $\rho(\gamma)\equiv I$ modulo every
maximal ideal, so each entry of $\rho(\gamma)-I$ lies in every maximal
ideal. The ring $\cO_{K,S}$ is a
localisation of the principal ideal domain $\cO_K$, hence is again a
principal ideal domain. In a principal ideal domain a nonzero element has
only finitely many prime divisors, so it lies in only finitely many
maximal ideals. But $\cO_{K,S}$ has infinitely many maximal ideals, one
for each finite place outside $S$. Hence each entry of $\rho(\gamma)-I$ is
zero, and $\rho(\gamma)=I$.
\end{proof}

\subsection{The congruence quotients are large}

The mechanism of \S\ref{sec:mech} requires the congruence quotients to be
large.  We begin with the reduction of the Sanov
subgroup itself.

\begin{lem}\label{lem:sanowmod}
For every odd prime $p$, reduction modulo $p$ carries the Sanov subgroup
$F_0$ onto $\SL_2(\F_p)$.
\end{lem}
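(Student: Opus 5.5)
The plan is to observe that for odd $p$ the integer $2$ is invertible in $\F_p$. The reductions of $A_0$ and $B_0$ therefore generate the same subgroup as the elementary matrices
$$
U=\begin{pmatrix}1&1\\0&1\end{pmatrix},\qquad L=\begin{pmatrix}1&0\\1&1\end{pmatrix},
$$
and it is classical that $U$ and $L$ generate $\SL_2(\F_p)$.

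First, let $m$ be an integer with $2m\equiv1\pmod p$, which exists since $p$ is odd. Then
$$
A_0^m=\begin{pmatrix}1&2m\\0&1\end{pmatrix}\equiv U,\qquad B_0^m\equiv L \pmod p.
$$
Conversely, $A_0\equiv U^2$ and $B_0\equiv L^2$. Hence the image $\bar F_0$ of $F_0$ in $\SL_2(\F_p)$ equals $\langle U,L\rangle$. Moreover, $\bar F_0$ contains every elementary matrix $\begin{pmatrix}1&x\\0&1\end{pmatrix}=U^x$ and $\begin{pmatrix}1&0\\x&1\end{pmatrix}=L^x$ for $x\in\F_p$, since $\F_p$ is the prime field.

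Second, I will show that the elementary matrices generate $\SL_2(\F_p)$ by a row-reduction argument. Take $g=\begin{pmatrix}a&b\\c&d\end{pmatrix}\in\SL_2(\F_p)$. If $c\neq0$, left-multiplying by $U^{x}$ with $x=(1-a)c^{-1}$ makes the $(1,1)$ entry equal to $1$. If $c=0$, then $a\neq0$, and first multiplying by $L$ gives a matrix whose $(2,1)$ entry is $a\neq 0$, reducing to the previous case. Once the $(1,1)$ entry is $1$, a left multiplication by a power of $L$ clears the $(2,1)$ entry. A right multiplication by a power of $U$ then clears the $(1,2)$ entry. The determinant forces the remaining diagonal entry to be $1$, so $g$ is a product of elementary matrices. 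Alternatively, one can cite the standard fact that $\SL_2$ over a field, or indeed over any Euclidean ring, is generated by elementary matrices.

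Third, combining these shows $\bar F_0=\SL_2(\F_p)$, which is surjectivity of reduction restricted to $F_0$.

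I expect no real obstacle. The only point needing care is the invertibility of $2$, which is exactly where oddness of $p$ is used. For $p=2$ the images of $A_0$ and $B_0$ are trivial, so the hypothesis cannot be dropped.
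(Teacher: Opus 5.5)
Your proof is correct and follows the paper's argument. Both proofs use the invertibility of $2$ modulo an odd prime to get the elementary matrices $E_{12}(1)$ and $E_{21}(1)$ as powers of $\bar A_0$ and $\bar B_0$, and then use the fact that these generate $\SL_2(\F_p)$; the only difference is that you check this generation by a short row-reduction, whereas the paper cites Gorenstein for it.
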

\begin{proof}
Write
\[
E_{12}(x)=\begin{pmatrix}1&x\\0&1\end{pmatrix},\qquad
E_{21}(x)=\begin{pmatrix}1&0\\x&1\end{pmatrix}.
\]
Modulo $p$ we have $\bar A_0=E_{12}(2)$ and $\bar B_0=E_{21}(2)$, and
$E_{12}(2)^k=E_{12}(2k)$ as well as $E_{21}(2)^k=E_{21}(2k)$. Since $2$ is
invertible in $\F_p$, the group $\langle\bar A_0,\bar B_0\rangle$ contains
$E_{12}(1)$ and $E_{21}(1)$. It is well known that these two matrices
generate $\SL_2(\F_p)$; see \cite[Chapter~2]{Gorenstein1980}.
\end{proof}

\begin{prop}\label{prop:surj}
Let $\fp\subset\cO_{K,S}$ be a maximal ideal with residue field $\F_q$,
residue characteristic $p$, and reduction map
$\SL_2(\cO_{K,S})\to\SL_2(\F_q)$, $X\mapsto\bar X$. Then $p\ne2,5$ and
\begin{enumerate}
\item[(i)] if $p\equiv\pm1\pmod5$, then $q=p$ and
$\rho_\fp(\GammaTwo)=\SL_2(\F_p)$;
\item[(ii)] if $p\equiv\pm2\pmod5$ and $p\ge7$, then $q=p^2$ and
$\rho_\fp(\GammaTwo)=\SL_2(\F_{p^2})$;
\item[(iii)] if $p=3$, then $q=9$ and
\begin{equation}\label{eq:p3conj}
\rho_\fp(\GammaTwo)=\bar R^{-1}\SL_2(\F_3)\bar R,
\end{equation}
a subgroup of order $24$ in $\SL_2(\F_{9})$.
\end{enumerate}
\end{prop}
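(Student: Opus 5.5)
The plan is to reduce everything to Lemma~\ref{lem:sanowmod} applied to the two "halves" of $\rho$, and then to decide how these two copies of $\SL_2(\F_p)$ sit inside $\SL_2(\F_q)$. First the preliminaries. Since $2$ and $5$ are units in $\cO_{K,S}$, no maximal ideal lies over $2$ or $5$, so $p\neq2,5$. The residue degree is read off from the splitting of $p$ in $K=\Q(\sqrt5)$. By quadratic reciprocity, $5$ is a square mod $p$ exactly when $p\equiv\pm1\pmod 5$. In that case $p$ splits and $q=p$; otherwise $p$ is inert and $q=p^2$. In particular $q=9$ for $p=3$.

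Next, let $H=\bar R^{-1}\SL_2(\F_p)\bar R\le\SL_2(\F_q)$. Since $C=R^{-1}B_0R$ and $D=R^{-1}A_0R$, Lemma~\ref{lem:sanowmod} gives $\langle\bar C,\bar D\rangle=H$. Similarly $A=T^{-1}(R^{-1}A_0R)T$ and $B=T^{-1}(R^{-1}B_0R)T$, so $\langle\bar A,\bar B\rangle=\bar T^{-1}H\bar T$. Hence
\[
\rho_\fp(\GammaTwo)=\langle H,\ \bar T^{-1}H\bar T\rangle .
\]
Case (i) is then immediate: when $q=p$, the group $H$ is a conjugate of the whole group $\SL_2(\F_p)$, so it already equals it.

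For (ii) and (iii) the key question is whether $\bar T$ normalizes $H$. Equivalently, we ask whether $M:=\bar R\bar T\bar R^{-1}$ normalizes $\SL_2(\F_p)$, that is, whether $M\in\F_q^\times\GL_2(\F_p)$. A direct computation gives
\[
RTR^{-1}=-\tfrac1{\sqrt5}\begin{pmatrix}\tau^{-2}-2\tau^2&1\\-1&2\tau^{-2}-\tau^2\end{pmatrix}.
\]
So $M$ lies in $\F_q^\times\GL_2(\F_p)$ if and only if the ratios of its entries lie in $\F_p$. The ratio of the $(1,1)$ entry to the $(1,2)$ entry is $\tau^{-2}-2\tau^2=-\tfrac{3(1+\sqrt5)}{2}$.

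For $p=3$ this entry vanishes mod $3$, and so does $2\tau^{-2}-\tau^2\equiv-(\tau^2+\tau^{-2})=-3$. Thus $M$ is a scalar times $\begin{pmatrix}0&1\\-1&0\end{pmatrix}\in\GL_2(\F_3)$, which normalizes $\SL_2(\F_3)$. Therefore $\bar T^{-1}H\bar T=H$, and the image is $H$, of order $24$; this proves (iii).

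For $p\ge7$ inert, $-\tfrac{3(1+\sqrt5)}2\notin\F_p$, because $3\neq0$ and $\sqrt5\notin\F_p$. So $\bar T$ does not normalize $H$, and $\rho_\fp(\GammaTwo)$ is a subgroup of $\SL_2(\F_{p^2})$ that properly contains the conjugate $H$ of $\SL_2(\F_p)$ and is not contained in $N(H)$.

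The main obstacle is the final step of (ii). There I will invoke Dickson's classification of subgroups of $\SL_2(\F_{p^2})$, passing to $\PSL_2$ if convenient, to show the following: for $p\ge7$, the only subgroups containing a conjugate of $\SL_2(\F_p)$ are that conjugate, its normalizer (the preimage of $\PGL_2(\F_p)$), and the whole group. The remaining Dickson types cannot contain $\SL_2(\F_p)$ when $p\ge7$. Borel subgroups, dihedral-type normalizers of tori, and the exceptional groups $2.A_4$, $2.S_4$, $2.A_5$ are all too small or have the wrong structure: $|\SL_2(\F_p)|\ge336$ and $\PSL_2(\F_p)$ is simple non-abelian. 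Other subfield groups are excluded because $\F_{p^2}$ has no intermediate subfield. This forces $\rho_\fp(\GammaTwo)=\SL_2(\F_{p^2})$.
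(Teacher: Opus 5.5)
Your route is essentially the paper's: Lemma~\ref{lem:sanowmod} on both halves, then a normalizer test by an entry ratio that would force $\bar s\in\F_p$, then Dickson. The one difference is the element you test. You ask whether $\bar T$ itself, i.e.\ $M=\bar R\bar T\bar R^{-1}$, normalizes $H$. The paper instead asks whether the image of $a_2$ normalizes $\langle\bar A,\bar B\rangle$. Your formula for $RTR^{-1}$ is correct. For (iii) your version is cleaner than the paper's: modulo $3$, $M$ is a scalar multiple of $\begin{pmatrix}0&1\\-1&0\end{pmatrix}\in\SL_2(\F_3)$, so $\bar T^{-1}H\bar T=H$ at once.

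There is, however, a gap in (ii), at ``so $\bar T$ does not normalize $H$, and $\rho_\fp(\GammaTwo)$ \dots\ is not contained in $N(H)$.'' The matrix $\bar T$ is not an element of $\rho_\fp(\GammaTwo)$. So $\bar T\notin N(H)$ only tells you that $H':=\bar T^{-1}H\bar T$ differs from $H$. It does not rule out $H'\subseteq N(H)$, in which case $\rho_\fp(\GammaTwo)=N(H)$. That is exactly the case your Dickson step has to exclude, so one more argument is needed.

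One fix: $[N(H):H]=2$, because $N(H)$ is the preimage of a copy of $\PGL_2(\F_p)$ and $-I\in H$. So if $H'\subseteq N(H)$, then $H'\cap H$ has index at most $2$ in $H'\cong\SL_2(\F_p)$. Since $\SL_2(\F_p)$ is perfect for $p\ge5$, this gives $H'\subseteq H$, hence $H'=H$ by orders, a contradiction. Alternatively, follow the paper and check directly that an element of the image, namely $\bar C$, fails to normalize $H'$; this is the computation of $\bar N=M\bar B_0M^{-1}$ in Claim~2.

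Relatedly, you use without proof the nontrivial direction of ``$M$ normalizes $\SL_2(\F_p)$ only if $M\in\F_q^\times\GL_2(\F_p)$.'' This is the paper's Claim~1, which applies to $M$ once you rescale it by $\sqrt2\in\F_{p^2}$ into $\SL_2(\F_{p^2})$.
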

\begin{proof}
The primes $2$ and $5$ are inverted in $\cO_{K,S}$, so no maximal ideal of
$\cO_{K,S}$ lies over them. Thus $p\ne2,5$, and $p$ falls into one of three
cases: $p\equiv\pm1\pmod5$; $p\equiv\pm2\pmod5$ with $p\ge7$; or $p=3$,
which also satisfies $3\equiv-2\pmod5$.

The splitting of $p$ in $K=\Q(\sqrt5)$ separates the first two cases. By
quadratic reciprocity, $5$ is a square modulo $p$ if and only if
$p\equiv\pm1\pmod5$. In that case $x^2-x-1$ splits modulo $p$, because its
discriminant is $5$ and $5$ is a square modulo $p$. As $\cO_K=\Z[\tau]$,
the prime $p$ splits in $\cO_K$, and the residue field of $\fp$ is $\F_p$.
Otherwise $x^2-x-1$ is irreducible modulo $p$, so $p$ is inert, the residue
field is $\F_{p^2}$, and $\bar s:=\overline{\sqrt5}$ generates it over
$\F_p$.

(i) Here $\bar s\in\F_p$, so $\bar\tau\in\F_p$; hence the entries of
$\bar R$ and of $\overline{RT}$ lie in $\F_p$, and $\det\bar R=-\bar s\ne0$.
Thus $\bar R$ and $\overline{RT}$ lie in $\GL_2(\F_p)$. By
Lemma~\ref{lem:sanowmod} the reductions $\bar A_0,\bar B_0$ generate
$\SL_2(\F_p)$. Since
$\bar A=(\overline{RT})^{-1}\bar A_0\overline{RT}$ and
$\bar B=(\overline{RT})^{-1}\bar B_0\overline{RT}$, they generate
$(\overline{RT})^{-1}\SL_2(\F_p)\overline{RT}$. This equals $\SL_2(\F_p)$,
because $\overline{RT}$ is an $\F_p$-rational change of basis.

(ii) Here $\bar s\notin\F_p$, so $\F_{p^2}=\F_p\oplus\bar s\F_p$, and
$\SL_2(\F_{p^2})$ contains the subgroup $\SL_2(\F_p)$ of matrices
with entries in $\F_p$. The reductions $\bar A_0,\bar B_0$ have entries in
$\F_p$, so by Lemma~\ref{lem:sanowmod} they generate $\SL_2(\F_p)$. Since
$\bar A=(\overline{RT})^{-1}\bar A_0\overline{RT}$ and
$\bar B=(\overline{RT})^{-1}\bar B_0\overline{RT}$, the images of
$a_1,b_1$ generate
\[
M_1=(\overline{RT})^{-1}\SL_2(\F_p)\overline{RT},
\]
a subgroup of order $p(p^2-1)$ of $\SL_2(\F_{p^2})$.

Let $M_0$ be the normalizer of $\SL_2(\F_p)$ in $\SL_2(\F_{p^2})$, so
that $M_1$ has normalizer $Q=(\overline{RT})^{-1}M_0(\overline{RT})$.

\textbf{Claim 1.}
$
M_0=\{\lambda X:\lambda\in\F_{p^2}^{\times},\ X\in\GL_2(\F_p),\
\det(\lambda X)=1\}.
$

\emph{Proof of Claim 1.} Every matrix of this form lies in $M_0$, since
conjugation by $X\in\GL_2(\F_p)$ preserves $\SL_2(\F_p)$ and $\lambda I$ is
central. Conversely, let
$Y=\begin{pmatrix}a&b\\c&d\end{pmatrix}\in\SL_2(\F_{p^2})$ normalize
$\SL_2(\F_p)$; then $\det Y=1$, so
$Y^{-1}=\begin{pmatrix}d&-b\\-c&a\end{pmatrix}$, and
\[
Y\begin{pmatrix}1&1\\0&1\end{pmatrix}Y^{-1}
=\begin{pmatrix}1-ac&a^2\\-c^2&1+ac\end{pmatrix},\qquad
Y\begin{pmatrix}1&0\\1&1\end{pmatrix}Y^{-1}
=\begin{pmatrix}1+bd&-b^2\\d^2&1-bd\end{pmatrix}
\]
both lie in $\SL_2(\F_p)$. Hence
\[
a^2,\ ac,\ c^2,\ b^2,\ bd,\ d^2\in\F_p .
\]
We show that $a,b,c,d$ lie on a common $\F_p$-line. If $a\ne0$, then
$c/a=(ac)/a^2\in\F_p$, so $c\in a\F_p$; and $a^2\in\F_p$ forces $a\F_p$
to be $\F_p$ or $\bar s\F_p$ (write $a=a_0+a_1\bar s$; then $a_0a_1=0$).
If $a=0$, then $\det Y=-bc=1$, so $c\ne0$, and the same argument applies
to $c$. Thus $a,c\in\lambda_1\F_p$ for some $\lambda_1\in\{1,\bar s\}$.
The same argument, applied to the pair $b,d$, which does not both vanish,
gives $b,d\in\lambda_2\F_p$ for some $\lambda_2\in\{1,\bar s\}$. If
$\lambda_1\ne\lambda_2$, then $ad,bc\in\bar s\F_p$, so
$\det Y=ad-bc\in\bar s\F_p$, contradicting $\det Y=1$. Hence
$\lambda_1=\lambda_2$, and $Y=\lambda_1X$ with $X\in\GL_2(\F_p)$. This
proves the claim.

\textbf{Claim 2.} $\rho_\fp(\GammaTwo)$ contains $M_1$ but not its normalizer
$Q$. In particular, the element
$\bar N=\overline{RT}\,\rho_\fp(a_2)\,(\overline{RT})^{-1}$ does not lie in
$M_0$.

\emph{Proof of Claim 2.} The images of $a_1,b_1$ generate $M_1\subseteq Q$,
so it remains to test the image of $a_2$; by the definition of $\bar N$,
$\rho_\fp(a_2)\in Q$ if and only if $\bar N\in M_0$. Using $\det R=-\sqrt5$
one computes
\[
\bar N
 =\bar R\bar T\bar R^{-1}\bar B_0\bar R\bar T^{-1}\bar R^{-1}
 =\frac1{10}\begin{pmatrix}13-3\sqrt5&-2\\27-9\sqrt5&7+3\sqrt5
\end{pmatrix}.
\]
By Claim~1, $\bar N\in M_0$ would mean $\bar N=\lambda X$ with
$\lambda\in\F_{p^2}^{\times}$ and $X\in\GL_2(\F_p)$, in which case the
quotient of the $(1,1)$-entry of $\bar N$ by the nonzero $(1,2)$-entry
$-1/5$ would lie in $\F_p$ (if $X_{12}=0$, then $\bar N_{12}=0$, which it is
not). But that quotient is $-(13-3\bar s)/2$, whose
coefficient of $\bar s$ is $3/2$, nonzero in $\F_p$; this would force
$\bar s\in\F_p$, contrary to $\bar s\notin\F_p$. Hence $\rho_\fp(a_2)\notin
Q$, and the claim is proved.

\textbf{Claim 3.} Let $p\ge7$. If $\SL_2(\F_p)\le H\le\SL_2(\F_{p^2})$ and
$H\not\subseteq M_0$, then $H=\SL_2(\F_{p^2})$.

\emph{Proof of Claim 3.} Let
$\pi\colon\SL_2(\F_{p^2})\to\PSL_2(\F_{p^2})$ be the quotient by $\{\pm I\}$,
and suppose $H\ne\SL_2(\F_{p^2})$. Since $-I\in\SL_2(\F_p)\subseteq H$, we
have $\pi^{-1}(\pi(H))=H$, so $\pi(H)$ is a proper subgroup of
$\PSL_2(\F_{p^2})$ containing $\PSL_2(\F_p)$. Let $L$ be a maximal subgroup
containing $\pi(H)$.

By Dickson's classification~\cite{Dickson1901} of the subgroups of $\PSL_2(q)$
(see also \cite[Hauptsatz~8.27]{Huppert1967}), the
maximal subgroup $L$ is one of the following candidates up to conjugacy: a dihedral
group of order $p^2-1$ or $p^2+1$; a Borel subgroup, the image of the
upper triangular matrices, of order $p^2(p^2-1)/2$; the subfield subgroup
$\PGL_2(\F_p)$; or one of $A_4$, $S_4$, $A_5$. Indeed, the remaining
types in Dickson's list, elementary abelian $p$-groups and cyclic groups
of order prime to $p$, lie in a Borel subgroup and in a dihedral one,
respectively, and $\PSL_2(\F_p)$ lies in $\PGL_2(\F_p)$. The dihedral
groups have order at most $p^2+1$, and $A_4$, $S_4$, $A_5$ have order at
most $60$. Since $p^2+1<p(p^2-1)/2$ and $60<168\le p(p^2-1)/2$ for
$p\ge7$, both are smaller than $|\PSL_2(\F_p)|=p(p^2-1)/2\le|L|$. A Borel
subgroup is metabelian, hence solvable, whereas $\PSL_2(\F_p)$ is simple
and nonabelian. So $L$ is a conjugate of $\PGL_2(\F_p)$.

By Claim~1 every element of $M_0$ is $\lambda X$ with $X\in\GL_2(\F_p)$. We
embed $\PGL_2(\F_p)$ into $\PSL_2(\F_{p^2})=\SL_2(\F_{p^2})/\{\pm I\}$ by
$[X]\mapsto[X/\sqrt{\det X}]$, where $\sqrt{\det X}\in\F_{p^2}$ exists
because every element of $\F_p^{\times}$ is a square there. Under this
embedding $\lambda X$ and $X$ have the same image: $\det(\lambda X)=1$
gives $\lambda=\pm(\det X)^{-1/2}$, so
$\lambda X=\pm X/\sqrt{\det X}$. Hence $\pi(M_0)$ is the projective image
of $\GL_2(\F_p)$, that is,
$\GL_2(\F_p)/\F_p^{\times}=\PGL_2(\F_p)$. Since $|L|=2|\PSL_2(\F_p)|$
and $L\supseteq\PSL_2(\F_p)$, the group $L$ is the normalizer of
$\PSL_2(\F_p)$ in $\PSL_2(\F_{p^2})$, which is $\pi(M_0)$. Since
$-I\in M_0$, we have
$\pi^{-1}(\pi(M_0))=M_0$, and therefore
$H=\pi^{-1}(\pi(H))\subseteq\pi^{-1}(L)=M_0$. This contradicts the
hypothesis $H\not\subseteq M_0$, so $H=\SL_2(\F_{p^2})$. This proves the
claim.

The group $\overline{RT}\,\rho_\fp(\GammaTwo)\,(\overline{RT})^{-1}$
contains $\SL_2(\F_p)$ and, by Claim~2, an element outside $M_0$, so it is
not contained in $M_0$. By Claim~3 it equals $\SL_2(\F_{p^2})$, and so does
$\rho_\fp(\GammaTwo)$.

(iii) Let $\bar s=\overline{\sqrt5}\in\F_9$. Then $\bar s^2=2$, so
$\F_9=\F_3(\bar s)$ and $\bar s^4=1$. Modulo $3$ we have
$\tau^2\equiv2\bar s$ and $\tau^{-2}\equiv\bar s$, whence
\[
\bar R=\begin{pmatrix}1&1\\2\bar s&\bar s\end{pmatrix},
\qquad \bar T=\begin{pmatrix}1&0\\0&2\end{pmatrix},
\qquad V:=\bar R\bar T\bar R^{-1}=\begin{pmatrix}0&\bar s\\2\bar s&0
\end{pmatrix}.
\]
Here $V^2=I$, because $\bar s\cdot2\bar s=2\bar s^2=4=1$ in $\F_3$. Since
$\bar A=(\bar R\bar T)^{-1}\bar A_0(\bar R\bar T)$, we get
$\bar R\bar A\bar R^{-1}=V^{-1}\bar A_0V=V\bar A_0V$, and similarly for
$\bar B$. Also $\bar R\bar C\bar R^{-1}=\bar B_0$ and
$\bar R\bar D\bar R^{-1}=\bar A_0$. Direct multiplication in
$\SL_2(\F_3)$ gives
\[
V\bar A_0V=\begin{pmatrix}1&0\\1&1\end{pmatrix},\qquad
V\bar B_0V=\begin{pmatrix}1&1\\0&1\end{pmatrix}.
\]
Hence $\bar R\,\rho_\fp(\GammaTwo)\,\bar R^{-1}$ is generated by
$E_{12}(1),E_{21}(1),E_{12}(2),E_{21}(2)$. By Lemma~\ref{lem:sanowmod}
this group is $\SL_2(\F_3)$, which proves \eqref{eq:p3conj}.
\end{proof}

\section{The main theorem}\label{sec:main}

We now assemble the two halves. The group-theoretic half is the retraction
mechanism of \S\ref{sec:mech}; the arithmetic half is
Proposition~\ref{prop:surj}, which says that the congruence quotients are
$\SL_2(\F_q)$ away from the single prime $p=3$.

\begin{thm}\label{thm:infinite}
For every integer $k\ge2$ the element $w_k=[a_1,b_1]^k\in\GammaTwo$ is not a
commutator in $\GammaTwo$, and its image $\rho_\fp(w_k)=\bar\Delta^{\,k}$ is a
commutator in $\rho_\fp(\GammaTwo)$ for \emph{every} maximal ideal
$\fp\subset\cO_{K,S}$.
\end{thm}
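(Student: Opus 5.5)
The first assertion is Corollary~\ref{cor:retr}. For the second, I plan to apply Theorem~\ref{thm:criterion} with $w=w_k$ and with $E$ the set of maximal ideals of residue characteristic $3$. By Proposition~\ref{prop:surj}, $3\equiv-2\pmod5$ is inert in $K$, so $E$ consists of the single ideal $\fp_3=3\cO_{K,S}$, with residue field $\F_9$.

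\textbf{Hypothesis (i).} Every $\fp\notin E$ has residue characteristic $p\ne2,3,5$. If $p\equiv\pm1\pmod5$, then Proposition~\ref{prop:surj}(i) gives $\rho_\fp(\GammaTwo)=\SL_2(\F_p)$. Here $p\ge11$, so $q\ge4$. If $p\equiv\pm2\pmod5$ and $p\ge7$, then Proposition~\ref{prop:surj}(ii) gives $\rho_\fp(\GammaTwo)=\SL_2(\F_{p^2})$, with $q=p^2\ge49$. So (i) holds, and Lemma~\ref{lem:thompson} takes care of all these ideals.

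\textbf{Hypothesis (ii).} This is the only real work: we must show that $\bar\Delta^{\,k}$ is a commutator in $\rho_{\fp_3}(\GammaTwo)=\bar R^{-1}\SL_2(\F_3)\bar R$. Conjugation by $\bar R$ is an isomorphism of this group onto $\SL_2(\F_3)$, and it maps commutators to commutators. It sends $\bar\Delta=\bar R^{-1}\bar J\bar R$ to $\bar J$, the reduction of $J=[A_0,B_0]$ modulo $3$. So it suffices to show that every power $\bar J^{\,k}$ is a commutator in $\SL_2(\F_3)$.

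Reducing modulo $3$,
\[
\bar J=\begin{pmatrix}0&1\\-1&0\end{pmatrix},
\]
so $\bar J^2=-I$ and $\bar J^4=I$. Hence $\bar J^{\,k}$ is one of $I$, $\bar J$, $-I$, $\bar J^{-1}$. These are commutators as follows:
\begin{itemize}
\item $I=[I,I]$;
\item $\bar J=[\bar A_0,\bar B_0]$;
\item $\bar J^{-1}=[\bar B_0,\bar A_0]$;
\item $-I=[X,Y]$, using the construction in the proof of Lemma~\ref{lem:thompson}. That construction works over $\F_3$ since $x=y=1$ satisfies $x^2+y^2=2=-1$. It gives $X=\begin{pmatrix}0&1\\-1&0\end{pmatrix}$ and $Y=\begin{pmatrix}1&1\\1&-1\end{pmatrix}$, both in $\SL_2(\F_3)$.
\end{itemize}
Conjugating back by $\bar R^{-1}$ shows that $\bar\Delta^{\,k}$ is a commutator in $\rho_{\fp_3}(\GammaTwo)$.

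With (i) and (ii) in hand, Theorem~\ref{thm:criterion} finishes the proof.

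The main obstacle is the characteristic-$3$ case. There the quotient is only of order $24$, Thompson's theorem does not apply, and one has to check explicitly that $-I$ is a commutator inside the small group $\SL_2(\F_3)$ itself, not merely inside $\SL_2(\F_9)$.
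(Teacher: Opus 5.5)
Your proposal is correct and follows essentially the same route as the paper: Theorem~\ref{thm:criterion} with $E$ the single ideal above $3$, conjugation by $\bar R$ reducing everything to the powers $I,\bar J,-I,\bar J^{-1}$ in $\SL_2(\F_3)$, and the same pair $X,Y$ (the paper's $u,v$) giving $-I$. The only difference is cosmetic: you write $\bar J^{\pm1}$ directly as $[\bar A_0,\bar B_0]$ and $[\bar B_0,\bar A_0]$, where the paper exhibits an explicit pair $u',v'$.
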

\begin{proof}
The first assertion is Corollary~\ref{cor:retr}.

For the second assertion, apply Theorem~\ref{thm:criterion} to $\GammaTwo$,
to the representation $\rho$ of \S\ref{sec:repr}, to $w=w_k$ and to the
set $E$ of maximal ideals of residue characteristic $3$. Condition (i)
holds by Proposition~\ref{prop:surj}(i),(ii), which gives
$\rho_\fp(\GammaTwo)=\SL_2(\F_q)$ with $q\ge7$ for $\fp\notin E$. The prime
$3$ is inert in $K$, so $E$ consists of the single ideal $\fp$ above $3$. For this ideal, $q=9$ and
$\rho_\fp(\GammaTwo)=\bar R^{-1}\SL_2(\F_3)\bar R$ by
Proposition~\ref{prop:surj}(iii). Since $\Delta^2=R^{-1}J^2R$ and
\[
J^2=\begin{pmatrix}377&-144\\144&-55\end{pmatrix}
\equiv\begin{pmatrix}2&0\\0&2\end{pmatrix}=-I\pmod3,
\]
we get $\overline{\Delta^2}=-I$. Also
\[
\bar R\,\overline{\Delta}\,\bar R^{-1}=\bar J
=\begin{pmatrix}0&1\\-1&0\end{pmatrix}=:u,
\]
which has order $4$. Put
\[
v=\begin{pmatrix}1&1\\1&-1\end{pmatrix},\quad
u'=\begin{pmatrix}0&1\\-1&-1\end{pmatrix},\quad
v'=\begin{pmatrix}0&1\\-1&1\end{pmatrix}.
\]
Their determinants are $\det v=\det v'=-2\equiv1$ and $\det u=\det u'=1$ in
$\F_3$, so all four matrices lie in $\SL_2(\F_3)$. Direct multiplication
gives $[u,v]=-I$, $[u',v']=u$ and $[v',u']=-u$. The matrix $\bar R$ is invertible, since
$\det\bar R=-\bar s$ and $\bar s^2=2\ne0$ in $\F_9$. By \eqref{eq:p3conj},
conjugation by $\bar R^{-1}$ maps $\SL_2(\F_3)$ onto
$\rho_\fp(\GammaTwo)$, and it preserves commutators. The powers of $u$ are
$I$, $u$, $-I$, $-u$, and each is a commutator of $\SL_2(\F_3)$:
$I=[I,I]$, $-I=[u,v]$, $u=[u',v']$ and $-u=[v',u']$. Since
$\bar\Delta=\bar R^{-1}u\bar R$, we have $\bar\Delta^{\,k}=\bar
R^{-1}u^k\bar R$, and $u^k$ is a commutator of $\SL_2(\F_3)$ for every
$k\ge2$. Hence $\bar\Delta^{\,k}$ is a commutator of
$\rho_\fp(\GammaTwo)$. This is condition (ii), so
Theorem~\ref{thm:criterion} gives the second assertion.
\end{proof}

The above infinitely many elements of $\GammaTwo$ witness the failure of the
congruence Hasse principle for commutators.
The failure descends to every genus. Let $g\ge2$, and let
\[
\psi\colon\GammaTwo\longrightarrow \Z_{g-1}
\]
be the homomorphism given by $\psi(a_1)=1$ and
$\psi(b_1)=\psi(a_2)=\psi(b_2)=0$. The subgroup $\ker\psi$ is a normal
subgroup of index $g-1$ in $\GammaTwo$, so it corresponds to a regular
covering of $\Sigma_2$ of degree $g-1$. Since $\Sigma_2$ has Euler
characteristic $-2$, the covering surface has Euler
characteristic $-2(g-1)=2-2g$ and hence genus $g$. Its fundamental group
is $\ker\psi$. Choose an isomorphism $i\colon\Gamma_g\to\ker\psi$, and
restrict $\rho$ to get a representation
$\rho\circ i\colon\Gamma_g\to\SL_2(\cO_{K,S})$. This gives congruence
quotients of $\Gamma_g$. We will show that they coincide with the congruence
quotients of $\GammaTwo$.

\begin{prop}\label{prop:kernel}
For every maximal ideal $\fp\subset\cO_{K,S}$ one has
\begin{equation}\label{eq:kernelp}
\rho_\fp(\ker\psi)=\rho_\fp(\GammaTwo).
\end{equation}
\end{prop}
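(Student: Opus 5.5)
The plan is to compare the two images through the quotient map induced by $\psi$. Put $G=\rho_\fp(\GammaTwo)$ and $N=\rho_\fp(\ker\psi)$. Since $\ker\psi$ is normal in $\GammaTwo$, the subgroup $N$ is normal in $G$. The quotient $\GammaTwo/\ker\psi\cong\Z_{g-1}$ is cyclic and generated by the class of $a_1$. Hence $G/N$ is cyclic, generated by the image of $\rho_\fp(a_1)$, and in particular abelian. So $N\supseteq[G,G]$. I would then split into cases according to Proposition~\ref{prop:surj}.

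For residue characteristic $p\ne3$, Proposition~\ref{prop:surj}(i),(ii) gives $G=\SL_2(\F_q)$ with $q\ge7$. Lemma~\ref{lem:thompson} shows that every element of $\SL_2(\F_q)$ is a commutator, so $G$ is perfect, that is, $[G,G]=G$. Therefore $N\supseteq[G,G]=G$, and \eqref{eq:kernelp} follows in this case.

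The case $p=3$ is the only real obstacle. Here $G\cong\SL_2(\F_3)$ has abelianization $\Z_3$, so the abelianization argument fails whenever $3\mid g-1$. Instead I would use generators lying in $\ker\psi$ directly. Since $\psi(a_2)=\psi(b_2)=0$, both $a_2$ and $b_2$ lie in $\ker\psi$, so $N$ contains $\rho_\fp(a_2)=\bar C=\bar R^{-1}\bar B_0\bar R$ and $\rho_\fp(b_2)=\bar D=\bar R^{-1}\bar A_0\bar R$. Modulo $3$, Lemma~\ref{lem:sanowmod} shows that $\bar A_0,\bar B_0$ generate $\SL_2(\F_3)$. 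Thus $\bar C,\bar D$ generate $\bar R^{-1}\SL_2(\F_3)\bar R$, which equals $G$ by \eqref{eq:p3conj}. Hence $N\supseteq G$, and since $N\subseteq G$ we get equality.

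The same argument in fact works in case (i) as well, because there the subgroup generated by $\bar C,\bar D$ is already all of $\SL_2(\F_p)$. In case (ii), however, $\bar C,\bar D$ generate only a conjugate of the subfield subgroup $\SL_2(\F_p)$, so the perfectness argument is needed there.
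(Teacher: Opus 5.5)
Your proof is correct, but for residue characteristic $p\ne3$ it takes a different route from the paper. The paper argues through explicit generators in every case:
for $p\equiv\pm1\pmod5$ it notes that $\bar C,\bar D$ already generate $\SL_2(\F_p)$;
for $p\equiv\pm2\pmod5$, $p\ge7$, it observes that $\bar C,\bar D$ generate only the conjugate $M_1'=\bar R^{-1}\SL_2(\F_p)\bar R$, reuses Claim~3 (hence Dickson's classification) conjugated by $\bar R^{-1}$, and shows by an explicit computation of $V^{-1}\bar B_0V$ that $\bar B\notin Q'$;
for $p=3$ it argues as you do.

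You instead use that $\ker\psi$ is normal with cyclic quotient. Then $\rho_\fp(\ker\psi)$ is a normal subgroup of $\rho_\fp(\GammaTwo)$ with abelian quotient, so it contains the derived subgroup. That derived subgroup is everything, because $\rho_\fp(\GammaTwo)=\SL_2(\F_q)$ with $q\ge4$ is perfect (Lemma~\ref{lem:thompson}).

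Your route is shorter. It needs no new matrix computation and no second appeal to Dickson, and away from $p=3$ it works verbatim for any normal subgroup of $\GammaTwo$ with abelian (indeed solvable) quotient. Your remark that it cannot work uniformly at $p=3$, because $\SL_2(\F_3)^{\mathrm{ab}}\cong\Z_3$, is exactly right.

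The paper's route yields slightly more. It shows that the images of $b_1,a_2,b_2$ alone already generate $\rho_\fp(\GammaTwo)$ for every $\fp$, a statement independent of $g$ and of $\psi$.
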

\begin{proof}
Note that $b_1,a_2,b_2\in\ker\psi$, so $\bar B,\bar C,\bar D$ lie in
$\rho_\fp(\ker\psi)$. Let $p$ be the residue characteristic. As in
Proposition~\ref{prop:surj}, $p\equiv\pm1\pmod5$, or $p\equiv\pm2\pmod5$
with $p\ge7$, or $p=3$.

Suppose $p\equiv\pm1\pmod5$. Then $\bar C,\bar D$ generate
$\bar R^{-1}\SL_2(\F_p)\bar R=\SL_2(\F_p)$, where the equality holds
because $\bar R\in\GL_2(\F_p)$. Since
$\rho_\fp(\ker\psi)\subseteq\rho_\fp(\GammaTwo)=\SL_2(\F_p)$ by
Proposition~\ref{prop:surj}(i), equality holds in \eqref{eq:kernelp}.

Suppose $p\equiv\pm2\pmod5$ and $p\ge7$. Then $\bar C,\bar D$ generate
$M_1'=\bar R^{-1}\SL_2(\F_p)\bar R$, whose normalizer is
$Q'=\bar R^{-1}M_0\bar R$. Claim~3 in the proof of
Proposition~\ref{prop:surj}(ii), conjugated by $\bar R^{-1}$, shows that a
subgroup of $\SL_2(\F_{p^2})$ that contains $M_1'$ lies in $Q'$, or equals
$\SL_2(\F_{p^2})$. We claim that $\bar B\notin Q'$. Put
$V=\bar R\bar T\bar R^{-1}$. Then
$\bar R\bar B\bar R^{-1}=V^{-1}\bar B_0V$, and a direct computation gives
\[
V^{-1}\bar B_0V=\frac1{10}\begin{pmatrix}13+3\sqrt5&-2\\
27+9\sqrt5&7-3\sqrt5\end{pmatrix}.
\]
If $\bar B\in Q'$, then $V^{-1}\bar B_0V\in M_0$, so by the description of
$M_0$ it equals $\lambda X$ with $X\in\GL_2(\F_p)$; then the quotient of the
$(1,1)$-entry by the nonzero $(1,2)$-entry $-2/10$ would lie in $\F_p$ (again
$X_{12}\ne0$). But
that quotient is $-(13+3\bar s)/2$, whose coefficient of $\bar s$ is $-3/2$,
nonzero in $\F_p$; this would force $\bar s\in\F_p$, a contradiction. Hence
$\bar B\notin Q'$, and $\rho_\fp(\ker\psi)=\SL_2(\F_{p^2})$.

Suppose $p=3$, and put again $V=\bar R\bar T\bar R^{-1}$. Then $V^2=I$ as
in Proposition~\ref{prop:surj}(iii). The computation there, applied to
$\bar B,\bar C,\bar D$, shows that $\bar R\rho_\fp(\ker\psi)\bar R^{-1}$
contains $\langle V\bar B_0V,\bar B_0,\bar A_0\rangle=\SL_2(\F_3)$; the
last equality is Lemma~\ref{lem:sanowmod}. Since
$\rho_\fp(\ker\psi)\subseteq\rho_\fp(\GammaTwo)
=\bar R^{-1}\SL_2(\F_3)\bar R$ by Proposition~\ref{prop:surj}(iii),
equality holds in \eqref{eq:kernelp}.
\end{proof}

\begin{cor}\label{cor:allgenera}
For every $g\ge2$ the congruence Hasse principle for commutators fails for
$\Gamma_g$, with respect to the representation $\rho\circ i$ above.
\end{cor}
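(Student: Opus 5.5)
We need an element of $\ker\psi$ that is not a commutator in $\ker\psi\cong\Gamma_g$, but whose image under $\rho_\fp$ is a commutator in $\rho_\fp(\ker\psi)$ for every maximal ideal $\fp$. The natural candidate is $w_k=[a_1,b_1]^k$ with $k\ge2$. It lies in $[\GammaTwo,\GammaTwo]$, and $\psi$ vanishes on commutators because $\Z_{g-1}$ is abelian, so $w_k\in\ker\psi$. We transport it to $\Gamma_g$ as $x=i^{-1}(w_k)$.

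First, $x$ is not a commutator in $\Gamma_g$. If $w_k=[u,v]$ with $u,v\in\ker\psi$, then the same identity holds in $\GammaTwo$, contradicting Corollary~\ref{cor:retr}. This is the easy direction, since being a commutator passes to overgroups. We also need $x\in[\Gamma_g,\Gamma_g]$, as Definition~\ref{def:chp} only tests elements of the commutator subgroup. This requires $w_k\in[\ker\psi,\ker\psi]$, not merely $w_k\in[\GammaTwo,\GammaTwo]$.

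\textbf{Membership in $[\ker\psi,\ker\psi]$ (main obstacle).} The plan is to rewrite $w_k$ explicitly as a product of commutators of elements of $\ker\psi$. Note that $b_1$, $a_2$, $b_2$ and all conjugates $a_1^j b_1 a_1^{-j}$ lie in $\ker\psi$. The defining relation gives $[a_1,b_1]=[a_2,b_2]^{-1}=[b_2,a_2]$. Hence $w_k=[b_2,a_2]^k$, and each factor $[b_2,a_2]$ is a commutator of elements of $\ker\psi$. Therefore $w_k\in[\ker\psi,\ker\psi]$, which settles membership.

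Second, the local condition. By Proposition~\ref{prop:kernel}, $\rho_\fp(\ker\psi)=\rho_\fp(\GammaTwo)$ for every $\fp$. By Theorem~\ref{thm:infinite}, $\rho_\fp(w_k)$ is a commutator in $\rho_\fp(\GammaTwo)$. Hence $(\rho\circ i)_\fp(x)=\rho_\fp(w_k)$ is a commutator in $(\rho\circ i)_\fp(\Gamma_g)=\rho_\fp(\ker\psi)$.

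Thus $x$ passes every congruence test but is not a commutator in $\Gamma_g$, so the congruence Hasse principle for commutators fails for $\Gamma_g$ with respect to $\rho\circ i$. Each $k\ge2$ gives a witness.
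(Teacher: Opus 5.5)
Your proposal is correct and follows the paper's proof essentially step for step. You use the same witness $[a_1,b_1]^k$, the same identity $[a_1,b_1]=[a_2,b_2]^{-1}$ to get membership in $[\ker\psi,\ker\psi]$, the same pull-back to $\GammaTwo$ via Corollary~\ref{cor:retr}, and Proposition~\ref{prop:kernel} together with Theorem~\ref{thm:infinite} for the local condition; the only cosmetic difference is that you justify $w_k\in\ker\psi$ by $\psi$ having abelian target rather than by exponent sum.
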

\begin{proof}
Let $k\ge2$. The element $[a_1,b_1]^k$ has exponent sum $0$ in $a_1$,
so it lies in $\ker\psi$. Let $h\in\Gamma_g$ be the element with
$i(h)=[a_1,b_1]^k$. 

Since $a_2,b_2\in\ker\psi$ and
$[a_1,b_1]=[a_2,b_2]^{-1}$, we have $[a_1,b_1]\in[\ker\psi,\ker\psi]
=i([\Gamma_g,\Gamma_g])$, and hence $h\in[\Gamma_g,\Gamma_g]$. If
$h=[x,y]$ in $\Gamma_g$, then $[a_1,b_1]^k=i(h)=[i(x),i(y)]$ in
$\GammaTwo$, which contradicts Corollary~\ref{cor:retr}. Thus $h$ is
not a commutator in $\Gamma_g$.

On the other hand $(\rho\circ i)(h)=\rho([a_1,b_1]^k)=\Delta^k$. For
every maximal ideal $\fp$ the congruence quotient of $\Gamma_g$ at
$\fp$ is $(\rho\circ i)_\fp(\Gamma_g)=\rho_\fp(\ker\psi)
=\rho_\fp(\GammaTwo)$ by Proposition~\ref{prop:kernel}, and the image
of $h$ in this quotient is $\bar\Delta^{\,k}$, a commutator by
Theorem~\ref{thm:infinite}. Since $k\ge2$ was arbitrary and distinct
$k$ give distinct elements $h$, the failure is witnessed by infinitely
many elements of $\Gamma_g$.
\end{proof}


\appendix
\section{Faithfulness of the generic representation}\label{app:faithful}

Let $t$ be an indeterminate, put
\[
T_t=\begin{pmatrix}1&0\\0&t\end{pmatrix},
\qquad
\Lambda=\cO_{K,S}[t,t^{-1}],
\]
and recall from Section~\ref{sec:repr} the Sanov subgroup
$F_0=\langle A_0,B_0\rangle\le\SL_2(\Z)$, its commutator
$J=[A_0,B_0]$, and $\Delta=R^{-1}JR$. Set
\[
A_t=(RT_t)^{-1}A_0(RT_t),\qquad
B_t=(RT_t)^{-1}B_0(RT_t),
\]
and set $C=R^{-1}B_0R$, $D=R^{-1}A_0R$ as in Section~\ref{sec:repr}.
All four matrices lie in
$\SL_2(\Lambda)$. As in the proof of Lemma~\ref{lem:relation}, the
matrices $T_t$ and $\Delta$ are diagonal and commute, so
$[A_t,B_t]=\Delta$ and $[C,D]=\Delta^{-1}$, and the assignment
$a_1\mapsto A_t$, $b_1\mapsto B_t$, $a_2\mapsto C$, $b_2\mapsto D$
defines a homomorphism
\[
\rho_t\colon\GammaTwo\to\SL_2(\Lambda).
\]
The representation $\rho$ of Section~\ref{sec:repr} is $\rho_2$. In this
appendix we prove that $\rho_t$ is faithful.

\begin{lem}\label{lem:primitive}
The commutator $[a,b]$ is not a proper power in the free group
$F_2=\langle a,b\rangle$.
\end{lem}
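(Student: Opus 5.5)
We want to show that $[a,b]$ is not a proper power in $F_2$. The quickest route uses what the paper already has: the Sanov representation $F_2\cong F_0$, $a\mapsto A_0$, $b\mapsto B_0$, which is an isomorphism by Sanov's theorem, together with the eigenvalues $\tau^{\pm6}$ of $J=[A_0,B_0]$ computed in Section~\ref{sec:repr}.

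Suppose $[a,b]=w^n$ with $w\in F_2$ and $|n|\ge2$; replacing $w$ by $w^{-1}$ we may take $n\ge2$. Put $W$ for the image of $w$ in $F_0\le\SL_2(\Z)$, so that $W^n=J$. Since $J$ has distinct real eigenvalues $\tau^{\pm6}$, $W$ commutes with $J$ and is diagonalised by the same matrix $R$. Thus $R^{-1}WR=\mathrm{diag}(\mu,\mu^{-1})$ with $\mu^n=\tau^{-6}$.

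Next we pin down $\mu$. It is an eigenvalue of an integer matrix of determinant $1$, and since $W$ is diagonalised over $K$ its eigenvalues lie in $K$. So $\mu$ is a unit of $\cO_K$, i.e.\ $\mu=\pm\tau^m$, and $\mu^n=\tau^{-6}$ gives $mn=-6$. Hence $n\in\{2,3,6\}$, and we must exclude these cases.

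The obstacle is that arithmetic alone does not exclude them: $\tau^{-2}$ and $\tau^{-3}$ correspond to genuine integer matrices of trace $\pm3$ or $\pm4$ (for instance $\tau^2+\tau^{-2}=3$). So we switch to a purely group-theoretic argument, which we would use as the main proof.

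Abelianise $F_2\to\Z^2$. The element $[a,b]$ maps to $0$, so $n\cdot\bar w=0$ and $\bar w=0$; hence $w\in[F_2,F_2]$. Now pass to the free nilpotent quotient $N=F_2/\gamma_3(F_2)$, the Heisenberg group. There $\gamma_2/\gamma_3\cong\Z$ is generated by the image of $[a,b]$, and the image of $w$ lies in $\gamma_2/\gamma_3$, say equal to $[a,b]^m$. Then $w^n\equiv[a,b]^{mn}$, so $mn=1$, which forces $n=\pm1$. This contradicts $|n|\ge2$.

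The only fact needed is that $\gamma_2/\gamma_3$ of $F_2$ is infinite cyclic generated by $[a,b]$. This is standard; one can see it from the Heisenberg group $\langle a,b\rangle\to$ upper unitriangular $3\times3$ integer matrices, under which $[a,b]$ maps to a generator of the centre, isomorphic to $\Z$. No Culler-type commutator-length input is needed, and this argument is the one we would write out.
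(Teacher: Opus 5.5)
Your main argument is correct, and it follows a different route from the paper. The paper works with reduced words. The word $[a,b]=aba^{-1}b^{-1}$ is cyclically reduced of length $4$. By a quoted standard fact, a $k$-th root of a cyclically reduced word is itself cyclically reduced, and its $k$-th power is literally $k$ concatenated copies of it. So $k\in\{2,4\}$, and both cases are ruled out by inspecting letters.

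You use the lower central series instead. Abelianising puts $w$ in $[F_2,F_2]$. In the class-$2$ quotient, the image of $[a,b]$ has infinite order and generates the image of $[F_2,F_2]$, which forces $mn=1$. You can avoid quoting the structure of $\gamma_2/\gamma_3$ altogether. In the Heisenberg group $H$ one checks directly that $[H,H]=Z(H)=\langle z\rangle$, with $z$ the image of $[a,b]$. So the image of $w$ is some $z^m$, and $z^{mn}=z$ gives $mn=1$.

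The two proofs buy different things:
\begin{itemize}
\item The paper's proof is short and elementary. It relies on a combinatorial fact about roots in free groups that it does not prove, and it is tailored to this one word.
\item Yours is self-contained up to a one-line matrix computation, and it proves more. It shows $z$ is not a proper power in $H$. So it applies verbatim to any element of any group that a homomorphism to $H$ sends to $z$. For example, it covers $[a_1,b_1]\in\GammaTwo$ via the retraction $r$.
\end{itemize}

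As an aside, your discarded arithmetic paragraph is slightly off. For $W\in\SL_2(\Z)$ the eigenvalue $\mu$ has norm $1$, since its Galois conjugate is $\mu^{-1}$. Hence $m$ is even, and only $n=3$ survives; no matrix in $\SL_2(\Z)$ has eigenvalue $\tau^{-3}$. In that case the only integral cube root of $J$ is $\begin{pmatrix}3&-1\\1&0\end{pmatrix}$. This matrix is not congruent to $I$ modulo $2$, whereas every element of $F_0$ is. So that route could also have been closed.
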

\begin{proof}
The word $[a,b]=aba^{-1}b^{-1}$ is cyclically reduced of length $4$. We
use the standard fact that if a cyclically reduced word $w$ equals
$v^k$, then $v$ is itself cyclically reduced, so the reduced word for
$w$ is the concatenation of $k$ copies of the reduced word for $v$.
Suppose $[a,b]=v^k$ with $k\ge2$; then $k$ divides $4$, so $k=2$ or $4$. If
$k=4$, then $v$ has length $1$, and $v^4$ is $a^{\pm4}$ or
$b^{\pm4}$, not $[a,b]$. If $k=2$, then $v$ has length $2$, and the
first two letters of $[a,b]$ are those of $v$, so $v=ab$; but
$v^2=abab$ differs from $[a,b]=aba^{-1}b^{-1}$, a
contradiction.
\end{proof}

\begin{lem}\label{lem:nonzero}
For every $w\in F_0$ with $w\notin\langle J\rangle$, both off-diagonal
entries of $R^{-1}wR$ are nonzero.
\end{lem}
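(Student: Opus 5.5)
The plan is to read the off-diagonal entries of $R^{-1}wR$ as eigenvector conditions, use the Galois action of $K$ over $\Q$ to pass from one eigenvector to the other, and finish with a centralizer argument in the free group $F_0$.

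\textbf{Step 1: eigenvector conditions.} The columns of $R$ are $v_1=(1,\tau^2)^{T}$ and $v_2=(1,\tau^{-2})^{T}$. Since $R^{-1}JR=\Delta$ is diagonal, these are eigenvectors of $J$. For any matrix $w$, the $(2,1)$-entry of $R^{-1}wR$ vanishes exactly when $wv_1\in K v_1$. Likewise, the $(1,2)$-entry vanishes exactly when $wv_2\in Kv_2$. So it suffices to show: if $w\in F_0$ has $v_1$ or $v_2$ as an eigenvector, then $w\in\langle J\rangle$.

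\textbf{Step 2: Galois conjugation.} Let $\sigma$ be the nontrivial automorphism of $K=\Q(\sqrt5)$. Then $\sigma(\tau)=(1-\sqrt5)/2=-\tau^{-1}$, so $\sigma(\tau^2)=\tau^{-2}$, and applying $\sigma$ entrywise sends $v_1$ to $v_2$ and back. Suppose $wv_1=\mu v_1$ with $\mu\in K$; such a $\mu$ exists in $K$ because $w$ has entries in $\Z$ and $v_1$ has entries in $K$. Applying $\sigma$ gives $wv_2=\sigma(\mu)v_2$, since $w$ is fixed by $\sigma$. The case of an eigenvector $v_2$ is symmetric. In either case both $v_1$ and $v_2$ are eigenvectors of $w$, so $R^{-1}wR$ is diagonal and commutes with $\Delta$. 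Hence $w$ commutes with $J$ in $\SL_2(\Z)$, and therefore in $F_0$.

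\textbf{Step 3: centralizer in $F_0$.} By Sanov's theorem, $F_0$ is free on $A_0,B_0$, so $a\mapsto A_0$, $b\mapsto B_0$ is an isomorphism $F_2\to F_0$ carrying $[a,b]$ to $J$. In a free group, two commuting elements generate a cyclic subgroup. Hence the centralizer of a nontrivial element $x$ is the infinite cyclic group generated by the unique root $r$ of $x$ that is not a proper power, with $x=r^m$. By Lemma~\ref{lem:primitive}, $J$ is not a proper power in $F_0$, so its centralizer is exactly $\langle J\rangle$. Therefore $w\in\langle J\rangle$, which contradicts the hypothesis, and both off-diagonal entries must be nonzero.

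The only step needing care is the Galois step: one must check that $\sigma(\tau^2)=\tau^{-2}$, and that the eigenvalue $\mu$ lies in $K$ so that $\sigma$ can be applied to the relation $wv_1=\mu v_1$. The rest is standard free-group theory.
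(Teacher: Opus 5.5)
Your proof is correct, and it takes a different route from the paper's. The paper never upgrades one eigenline to two. It lets $H$ be the set of $v\in F_0$ with $R^{-1}vR$ upper triangular. Then $H$ embeds in the solvable upper triangular subgroup of $\SL_2(K)$, so it is a solvable subgroup of the free group $F_0$ and hence cyclic. Since $J\in H$, Lemma~\ref{lem:primitive} gives $H=\langle J\rangle$. The $(1,2)$-entry is handled by the same argument with lower triangular matrices. Your Galois step instead shows that a matrix with rational entries that stabilizes one eigenline of $J$ must also stabilize the conjugate eigenline. So $w$ lies in the centralizer of $J$, and you only need the slightly weaker fact that centralizers in free groups are cyclic, rather than that solvable subgroups are. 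The paper's argument is purely group-theoretic: it would work for any conjugator diagonalizing $J$ over any field, and it never uses that $F_0\le\SL_2(\Q)$ or that the eigenvalues $\tau^{\pm6}$ are irrational. Your argument uses that arithmetic input, but it handles both entries at once and yields a stronger intermediate fact. Since $\sigma(R)$ is $R$ with its columns swapped, applying $\sigma$ entrywise to $R^{-1}wR$ swaps its rows and columns for any $w\in\GL_2(\Q)$. Hence the two off-diagonal entries of $R^{-1}wR$ are Galois conjugates of each other and vanish simultaneously.
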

\begin{proof}
Suppose $(R^{-1}wR)_{21}=0$, and let $H$ be the group of all $v\in F_0$
for which $R^{-1}vR$ is upper triangular. The map $v\mapsto R^{-1}vR$
embeds $H$ into the upper triangular subgroup of $\SL_2(K)$, which is
solvable, so $H$ is solvable; as a subgroup of the free group $F_0$, it
is cyclic. It contains $J$, because $\Delta=R^{-1}JR$ is diagonal.
Since $F_0$ is free on $A_0$ and $B_0$, Lemma~\ref{lem:primitive} shows
that $J=[A_0,B_0]$ is not a proper power in $F_0$, so the cyclic group
$H$ equals $\langle J\rangle$; thus $w\in\langle J\rangle$, a
contradiction. The same argument with lower triangular matrices gives
the assertion for the $(1,2)$-entry.
\end{proof}

The group $\GammaTwo$ decomposes as the free product of its subgroups
$\langle a_1,b_1\rangle$ and $\langle a_2,b_2\rangle$ with amalgamated
subgroup $\langle c\rangle$, where $c=[a_1,b_1]=[a_2,b_2]^{-1}$. The images of the two free factors are the conjugates
$T_t^{-1}(R^{-1}F_0R)T_t$ and
$R^{-1}F_0R$ of the Sanov subgroup, free on the pairs $A_t,B_t$ and
$C,D$ respectively; hence the restrictions of $\rho_t$ to the two
factors are faithful.

By the
normal form theorem for amalgamated products
\cite[Chapter~IV]{LyndonSchupp}, every $1\neq g\in
\GammaTwo$ is conjugate into $\langle a_1,b_1\rangle$ or
$\langle a_2,b_2\rangle$, or to a product $g_1g_2\cdots g_{2l}$ with
$l\ge1$, $g_{2i-1}\in\langle a_1,b_1\rangle\setminus\langle c\rangle$
and $g_{2i}\in\langle a_2,b_2\rangle\setminus\langle c\rangle$. The entries of every matrix
$\rho_t(g)$ are Laurent polynomials in $t$.

\begin{lem}\label{lem:degtr}
Let $g=g_1\cdots g_{2l}$, $l\ge1$, be in normal form. Then
$\deg\tr\rho_t(g)=l$.
\end{lem}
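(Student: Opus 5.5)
Conjugate everything by $R$ so that we work with the Sanov group itself. Put $g_{2i-1}=u_i\in\langle a_1,b_1\rangle\setminus\langle c\rangle$ and $g_{2i}=v_i\in\langle a_2,b_2\rangle\setminus\langle c\rangle$. By construction
\[
R\rho_t(u_i)R^{-1}=R T_t^{-1}\,(R^{-1}U_iR)\,T_tR^{-1},\qquad
R\rho_t(v_i)R^{-1}=V_i,
\]
where $U_i,V_i\in F_0\setminus\langle J\rangle$. Here we use that $\rho_t$ restricted to each factor is the faithful Sanov identification, which sends $c$ to $J^{\pm1}$. Write $P_i=R^{-1}U_iR$ and $Q_i=R^{-1}V_iR$, two matrices over $K$ that do not depend on $t$. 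Since the trace is conjugation invariant,
\[
\tr\rho_t(g)=\tr\bigl(T_t^{-1}P_1T_t\,Q_1\,T_t^{-1}P_2T_t\,Q_2\cdots T_t^{-1}P_lT_t\,Q_l\bigr).
\]
Conjugating a matrix by $T_t$ gives
\[
T_t^{-1}PT_t=\begin{pmatrix}p_{11}&tp_{12}\\ t^{-1}p_{21}&p_{22}\end{pmatrix}.
\]
By Lemma~\ref{lem:nonzero} the entries $p_{12}$ and $p_{21}$ are nonzero for each $P_i$, and likewise the off-diagonal entries of each $Q_i$ are nonzero.

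Next, expand the trace as a sum over closed index paths $i_0\to i_1\to\cdots\to i_{2l}=i_0$ in $\{1,2\}$. Each factor $T_t^{-1}P_iT_t$ contributes $t^{+1}$ exactly when it uses the entry $(1,2)$, and $t^{-1}$ when it uses $(2,1)$; the factors $Q_i$ contribute no power of $t$. The total $t$-exponent is therefore at most $l$. It equals $l$ only if every $P$-factor uses the entry $(1,2)$. Then each $Q$-factor must go from row $2$ back to column $1$, so it uses its $(2,1)$ entry. Thus exactly one path reaches degree $l$, and the coefficient of $t^l$ is
\[
\prod_{i=1}^{l}(P_i)_{12}\,(Q_i)_{21},
\]
which is nonzero by Lemma~\ref{lem:nonzero}. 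No cancellation is possible, since this is the only monomial of that degree. Hence $\deg\tr\rho_t(g)=l$. As a byproduct the lowest degree is $-l$, with leading coefficient $\prod_i(P_i)_{21}(Q_i)_{12}$.

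The main point to check carefully is the reduction that the elements $U_i,V_i$ lie outside $\langle J\rangle$. This requires the images of $\langle c\rangle$ under the factor identifications to be exactly $\langle J\rangle$. That holds because $c\mapsto[A_0,B_0]=J$ in the first factor and $c=[a_2,b_2]^{-1}\mapsto[B_0,A_0]^{-1}=J$ in the second, and both factor maps are injective by Sanov's theorem. One must also confirm that the $R$-conjugation is handled consistently, with $\rho_t(u)=T_t^{-1}R^{-1}UR\,T_t$ and $\rho_t(v)=R^{-1}VR$, so that a single global conjugation by $R$ is legitimate. After these checks, the leading-term bookkeeping above is routine.
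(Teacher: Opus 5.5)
Your proof is correct and follows the paper's route. It uses the same factorization of $\rho_t(g)$ into the $t$-conjugated factors $T_t^{-1}P_iT_t$ and the $t$-free factors $Q_i$, the same observation that only the $(1,2)$-entries of the former raise the $t$-degree, and the same appeal to Lemma~\ref{lem:nonzero}, after checking $U_i,V_i\notin\langle J\rangle$, to see that the top coefficient $\prod_{i=1}^{l}(P_i)_{12}(Q_i)_{21}$ is nonzero.

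The only difference is bookkeeping. You expand the trace over closed index paths and isolate the unique top monomial directly, whereas the paper inducts on the degrees of the entries of the partial products $\rho_t(g_1\cdots g_{2k})$; your version makes the leading coefficient explicit and gives the bottom term $t^{-l}$ for free.
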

\begin{proof}
Since $\rho_t(g_{2i-1})\in T_t^{-1}(R^{-1}F_0R)T_t,~
\rho_t(g_{2i})\in R^{-1}F_0R$, we can write
$\rho_t(g_{2i-1})=T_t^{-1}M_{2i-1}T_t$ and $\rho_t(g_{2i})=M_{2i}$ with
$M_{2i-1}=R^{-1}w_{2i-1}R$ and $M_{2i}=R^{-1}w_{2i}R$ for some
$w_{2i-1},w_{2i}\in F_0$; their entries do not involve $t$. Moreover
$w_{2i-1},w_{2i}\notin\langle J\rangle$: indeed $g_{2i-1},g_{2i}\notin
\langle c\rangle$ by the normal form, and $w_{2i-1}=J^k$ would give
$\rho_t(g_{2i-1})=\Delta^k=\rho_t(c^k)$, hence $g_{2i-1}=c^k$ by the
injectivity of $\rho_t$ on the factors, a contradiction; the case
$w_{2i}=J^k$ is similar. 

Put $P_0=I$ and $P_k=\rho_t(g_1\cdots g_{2k})$ for $1\le k\le l$, and
write $P_{k+1}=P_kQ_k$ for $0\le k\le l-1$ with
$Q_k=\rho_t(g_{2k+1}g_{2k+2})
=T_t^{-1}M_{2k+1}T_t\,M_{2k+2}$. The entries of $Q_k$ are
\[
\begin{aligned}
(Q_k)_{11}&=(M_{2k+1})_{11}(M_{2k+2})_{11}
+t\,(M_{2k+1})_{12}(M_{2k+2})_{21},\\
(Q_k)_{12}&=(M_{2k+1})_{11}(M_{2k+2})_{12}
+t\,(M_{2k+1})_{12}(M_{2k+2})_{22},\\
(Q_k)_{21}&=t^{-1}(M_{2k+1})_{21}(M_{2k+2})_{11}
+(M_{2k+1})_{22}(M_{2k+2})_{21},\\
(Q_k)_{22}&=t^{-1}(M_{2k+1})_{21}(M_{2k+2})_{12}
+(M_{2k+1})_{22}(M_{2k+2})_{22}.
\end{aligned}
\]
Thus $\deg(Q_k)_{12}\le1$ and $\deg(Q_k)_{21},\deg(Q_k)_{22}\le0$,
while $\deg(Q_k)_{11}=1$ since
$(M_{2k+1})_{12}(M_{2k+2})_{21}$ is nonzero by
Lemma~\ref{lem:nonzero}. 

We prove by induction on $k\ge1$ that
\[
\deg(P_k)_{11}=k,\qquad \deg(P_k)_{12}\le k,\qquad
\deg(P_k)_{21}\le k-1,\qquad \deg(P_k)_{22}\le k-1.
\]
For $k=1$, where $P_1=Q_0$, these are exactly the bounds on $Q_0$ just
proved. Assume them for some $k\ge1$; the
entries of $P_{k+1}=P_kQ_k$ are
\[
\begin{aligned}
(P_{k+1})_{11}&=(P_k)_{11}(Q_k)_{11}+(P_k)_{12}(Q_k)_{21},\\
(P_{k+1})_{12}&=(P_k)_{11}(Q_k)_{12}+(P_k)_{12}(Q_k)_{22},\\
(P_{k+1})_{21}&=(P_k)_{21}(Q_k)_{11}+(P_k)_{22}(Q_k)_{21},\\
(P_{k+1})_{22}&=(P_k)_{21}(Q_k)_{12}+(P_k)_{22}(Q_k)_{22}.
\end{aligned}
\]
The entries of $P_k$ have degree at most $k$ in the first row and at
most $k-1$ in the second row, and those of $Q_k$ at most $1$ in the first
row and at most $0$ in the second; hence $\deg(P_{k+1})_{12}\le k+1$
and $\deg(P_{k+1})_{21},\deg(P_{k+1})_{22}\le k$. In the $(1,1)$-entry
the summand $(P_k)_{12}(Q_k)_{21}$ has degree at most $k$, while
$(P_k)_{11}(Q_k)_{11}$ has degree exactly $k+1$; hence
$\deg(P_{k+1})_{11}=k+1$, which proves the claim for $k+1$. 

Taking
$k=l$, the $(1,1)$-entry of $\rho_t(g)=P_l$ has degree $l$, while its
$(2,2)$-entry has degree at most $l-1$. Hence $\deg\tr\rho_t(g)=l$.
\end{proof}

\begin{thm}\label{thm:faithful}
The homomorphism $\rho_t\colon\GammaTwo\to\SL_2(\Lambda)$ is faithful.
\end{thm}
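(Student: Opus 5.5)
Let $1\ne g\in\GammaTwo$; we show $\rho_t(g)\ne I$. Since conjugation does not change whether $\rho_t(g)=I$, and the normal form theorem quoted above lets us replace $g$ by a conjugate, there are two cases. Either $g$ is conjugate into one of the factors $\langle a_1,b_1\rangle$ or $\langle a_2,b_2\rangle$, or $g$ is conjugate to a product $g_1\cdots g_{2l}$ in normal form with $l\ge1$.

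\emph{First case.} The restrictions of $\rho_t$ to the two factors are faithful. Indeed, their images are conjugates of the Sanov subgroup $F_0$, which is free on the images of the generators. So a nontrivial element of a factor is not sent to $I$, and hence neither is any conjugate of it.

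\emph{Second case.} We may assume $g=g_1\cdots g_{2l}$ in normal form with $l\ge1$. By Lemma~\ref{lem:degtr}, $\tr\rho_t(g)$ is a Laurent polynomial in $t$ of degree $l\ge1$; in particular it is nonconstant. But $\tr I=2$ is constant, so $\rho_t(g)\ne I$.

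The only point needing care is that the coefficient ring allows the degree argument. Here $\Lambda=\cO_{K,S}[t,t^{-1}]$ with $\cO_{K,S}$ a domain, so "degree" (the highest power of $t$) is well defined. Moreover, a Laurent polynomial of positive degree differs from the constant $2$. The genuine difficulty, the exact degree count, was already handled in Lemma~\ref{lem:degtr} using Lemma~\ref{lem:nonzero}. So no step here is a real obstacle beyond invoking the normal form correctly, and this completes the proof that $\ker\rho_t$ is trivial.
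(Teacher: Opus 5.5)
Your proposal is correct and matches the paper's proof essentially verbatim. You reduce by conjugation to the two cases of the normal form theorem, use faithfulness of $\rho_t$ on the free factors (conjugates of the Sanov subgroup) in the first case, and use Lemma~\ref{lem:degtr} in the second to get a trace of positive degree, hence not $2$; your remark that $\cO_{K,S}$ is a domain only makes explicit what the paper's Lemma~\ref{lem:degtr} already relies on.
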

\begin{proof}
Let $g\ne1$. Conjugation by an element of $\GammaTwo$ does not affect
whether $\rho_t(g)$ is the identity, so by the normal form theorem there
are two cases.

If $g$ is conjugate to an element of one of the free factors, then
$\rho_t(g)$ is conjugate to the image of a nontrivial element of that
factor. The restrictions of $\rho_t$ to the two factors are faithful,
so $\rho_t(g)\ne I$.

Otherwise $g$ is conjugate to a product $g_1\cdots g_{2l}$, $l\ge1$, in
normal form. By Lemma~\ref{lem:degtr} we have $\deg\tr\rho_t(g)=l\ge1$. Hence
$\tr\rho_t(g)$ is not the constant $2$, and $\rho_t(g)\ne I$. 

Thus $\rho_t$ is injective.
\end{proof}

\end{document}